\theoremstyle{plain}
\newtheorem{thm}{Theorem}
  \theoremstyle{remark}
  \newtheorem{rem}[thm]{Remark}
  \theoremstyle{plain}
  \newtheorem{lem}[thm]{Lemma}
  \theoremstyle{plain}
  \newtheorem{cor}[thm]{Corollary}
  \theoremstyle{plain}
  \newtheorem{prop}[thm]{Proposition}
\DeclareMathOperator{\Id}{Id}
\begin{document}

\title{Non degeneracy for solutions of singularly perturbed nonlinear elliptic problems
on symmetric Riemannian manifolds}

\author{M.Ghimenti\thanks{Dipartimento di Matematica Applicata,
Universit\`a di Pisa, via Buonarroti 1c, 56127, Pisa, Italy, e-mail
{\tt marco.ghimenti@dma.unipi.it, a.micheletti@dma.unipi.it} },
A.M.Micheletti\addtocounter{footnote}{-1}\footnotemark }

\maketitle
\begin{abstract}
Given a symmetric Riemannian manifold $(M,g)$, we show some results
of genericity for non degenerate sign changing solutions of singularly
perturbed nonlinear elliptic problems with respect to the parameters:
the positive number $\varepsilon$ and the symmetric metric $g$.
Using these results we obtain a lower bound on the number of non degenerate
solutions which change sign exactly once.

\textbf{Keywords}: symmetric Riemannian manifolds, non degenerate
sign changing solutions, singularly perturbed nonlinear elliptic problems

\textbf{AMS subject classification}: 58G03, 58E30
\end{abstract}

\section{Introduction}

Let $(M,g)$ be a smooth connected compact Riemannian manifold of
finite dimension $n\geq2$ embedded in $\mathbb{R}^{N}$. Le us consider
the problem\begin{equation}
\left\{ \begin{array}{cc}
-\varepsilon^{2}\Delta_{g}u+u=|u|^{p-2}u & \text{ in }M\\
u\in H_{g}^{1}(M)\end{array}\right.\label{eq:P}\end{equation}

Recently there have been some results on the influence of the topology
(see \cite{BBM07,H09,V08}) and the geometry (see \cite{BP05,DMP09,MP09})
of $M$ on the number of positive solutions of problem (\ref{eq:P}).
This problem has similar features with the Neumann problem on a flat
domain, which has been largely studied in literature (see \cite{DY,DFW,G,GWW,Li,NT1,NT2,W1,WW05,WW}). 

Concerning the sign changing solution the first result is contained
in \cite{MP09b} where it is showed the existence of a solution with
one positive peak and one negative peak when the scalar curvature
of $(M,g)$ is non constant.

Moreover in \cite{GM10} the authors give a multiplicity result for
solutions which change sign exactly once when the Riemannian manifold
is symmetric with respect to an orthogonal involution $\tau$ using
the equivariant Ljusternik Schnirelmann category.

In this paper we are interested in studying the non degeneracy of
changing sign solutions when the Riemannian manifold $(M,g)$ is symmetric.

We consider the problem
\begin{equation}
\left\{ \begin{array}{cc}
-\varepsilon^{2}\Delta_{g}u+u=|u|^{p-2}u & u\in H_{g}^{1}(M)\\
u(\tau x)=-u(x) & \forall x\in M\end{array}\right.\label{eq:Ptau}
\end{equation}
where $\tau:\mathbb{R}^{N}\rightarrow\mathbb{R}^{N}$ is an orthogonal
linear transformation such that $\tau\neq \Id$, $\tau^{2}=\Id$ ($\Id$
being the identity on $\mathbb{R}^{N}$). Here the compact connected
Riemannian manifold $(M,g)$ of dimension $n\geq2$ is a regular submanifold
of $\mathbb{R}^{N}$ invariant with respect to $\tau$. Let $M_{\tau}=\left\{ x\in M\ :\ \tau x=x\right\} $.
In the case $M_{\tau}\neq\emptyset$ we assume that $M_{\tau}$ is
a regular submanifold of $M$. In the following 
$H_{g}^{\tau}=\left\{ u\in H_{g}^{1}(M)\ :\ \tau^*u=u\right\} $
where the linear operator $\tau^{*}:H_{g}^{1}\rightarrow H_{g}^{1}$
is $\tau^{*}u=-u(\tau(x))$. 

We obtain the following genericity results about the non degeneracy
of changing sign solutions of (\ref{eq:Ptau}) with respect to the
parameters: the positive number $\varepsilon$, and the symmetric
metric $g$ (i.e. $g(\tau x)=g(x)$).
\begin{thm}
\label{thm:residuale}Given $g_{0}\in\mathscr{M}^{k}$, the set\[
D=\left\{ \begin{array}{c}
(\varepsilon,h)\in(0,1)\times\mathscr{B}_{\rho}\ :\text{ any }u\in H_{g_{0}}^{\tau}\text{ solution of}\\
-\varepsilon^{2}\Delta_{g_{0}+h}u+u=|u|^{p-2}u\text{ is not degenerate}\end{array}\right\} \]
is a residual subset of $(0,1)\times\mathscr{B}_{\rho}$. 
\end{thm}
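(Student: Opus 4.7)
The plan is to deduce the result from the Sard--Smale transversality theorem, applied to the map
\[
F\colon(0,1)\times\mathscr{B}_\rho\times H_{g_0}^\tau\longrightarrow(H_{g_0}^\tau)^{*},\qquad
\langle F(\varepsilon,h,u),v\rangle=\int_M\!\Bigl(\varepsilon^{2}\langle\nabla_{g_0+h}u,\nabla_{g_0+h}v\rangle_{g_0+h}+uv-|u|^{p-2}uv\Bigr)dV_{g_0+h},
\]
whose zeros are exactly the $\tau$-antisymmetric solutions of the perturbed equation; the restriction to $H_{g_0}^{\tau}$ is legitimate because $\tau$ is an isometry of every $g_{0}+h$, so the antisymmetric subspace is invariant. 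The first routine step is to check that $F$ is of class $C^{k}$ in all variables jointly, using the standard smooth dependence of $\Delta_{g}$ on $g$ on a compact manifold and the regularity of the nonlinearity $|u|^{p-2}u$.

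Next I would set up the Fredholm picture. At a zero $(\varepsilon_{0},h_{0},u_{0})$, the vertical derivative is $L=-\varepsilon_{0}^{2}\Delta_{g_{0}+h_{0}}+1-(p-1)|u_{0}|^{p-2}$, a self-adjoint compact perturbation of the Riesz isomorphism $H_{g_0}^{\tau}\to(H_{g_0}^{\tau})^{*}$; hence $L$ is Fredholm of index zero with $\operatorname{coker}L\simeq\ker L$. The Sard--Smale theorem then tells us that the set of regular values of the projection $\pi\colon F^{-1}(0)\to(0,1)\times\mathscr{B}_\rho$ is residual (the parameter space is an open subset of a separable Banach space, hence Baire), and a pair $(\varepsilon,h)$ is a regular value of $\pi$ precisely when every $\tau$-antisymmetric solution of the corresponding equation is non-degenerate, i.e.\ when $(\varepsilon,h)\in D$.

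The crucial step, and the main obstacle, is the verification that the full derivative $DF$ is surjective at every zero. Given $\phi\in\ker L\setminus\{0\}$ (smooth by elliptic regularity), I must produce an admissible variation $(\delta\varepsilon,\delta h)$, with $\delta h$ a $\tau$-invariant symmetric $(0,2)$-tensor of class $C^{k}$, such that $\langle D_{\varepsilon}F\,\delta\varepsilon+D_{h}F\,\delta h,\phi\rangle\neq0$. Using the standard formulas for the variation of $\Delta_{g}$ and $dV_{g}$ with respect to the metric, the $h$-pairing rewrites as $\int_{M}T_{u_{0},\phi}(\delta h)\,dV_{g_{0}+h_{0}}$, where $T_{u_{0},\phi}$ is a symmetric $(2,0)$-tensor built algebraically from $u_{0}$, $\phi$ and their first covariant derivatives. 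Because $u_{0}$ and $\phi$ are both $\tau$-antisymmetric and $\tau$ is an isometry, $T_{u_{0},\phi}$ is automatically $\tau$-invariant, so it may be tested against the admissible symmetric variations without loss. If the total pairing vanished for every choice of $(\delta\varepsilon,\delta h)$, a density argument on the $\tau$-invariant symmetric tensors would force $T_{u_{0},\phi}\equiv0$ on $M$; combining the resulting pointwise relation between $\nabla u_{0}$ and $\nabla\phi$ with the equation $L\phi=0$ and unique continuation for the elliptic operator $L$ would then yield $\phi\equiv0$, a contradiction. This closes the transversality argument, and Sard--Smale delivers the residual set $D$.
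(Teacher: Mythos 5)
Your proposal follows essentially the same strategy as the paper: set up a parametric map $F$ whose zeros are the $\tau$-antisymmetric weak solutions, show the vertical derivative at a zero is a self-adjoint compact perturbation of the identity (hence Fredholm of index zero), check surjectivity of the full derivative, and invoke a parametric transversality theorem to conclude that the regular values of the projection onto parameter space form a residual set. Your map to the dual space $(H_{g_0}^\tau)^*$ is equivalent, after Riesz representation, to the paper's formulation $F(\varepsilon,h,u)=u-A_h^\varepsilon(|u|^{p-2}u)$ with values in $H_{g_0}^\tau$. Your observation that the tensor $T_{u_0,\phi}$ is automatically $\tau$-invariant — so that testing against $\tau$-invariant variations $\delta h$ already determines it — is a genuinely slick remark that would let you bypass the paper's separate treatment of the cases $\xi_0=\tau\xi_0$ and $\xi_0\neq\tau\xi_0$ in Lemma~\ref{lem:4.3}.

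That said, there are two real gaps. First, the transversality theorem you want to invoke (with a Banach space of parameters) has a properness hypothesis: the paper's Theorem~\ref{thm:gen2} requires the projection $\pi\circ i:F^{-1}(0)\to(0,1)\times\mathscr{B}_\rho$ to be $\sigma$-proper, and an entire lemma (Lemma~\ref{lem:4.2}) is devoted to exhibiting a suitable exhaustion of $F^{-1}(0)$ by closed pieces $C_s$ on which $\pi\circ i$ is proper. Saying that the parameter space ``is an open subset of a separable Banach space, hence Baire'' is not a substitute for this; without the $\sigma$-properness (or some equivalent compactness input) you cannot conclude residuality. You must actually carry out the compactness argument showing that sequences $(\varepsilon_n,h_n,u_n)\in F^{-1}(0)$ with $(\varepsilon_n,h_n)$ convergent and $u_n$ bounded away from $0$ and $\infty$ have convergent subsequences.

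Second, the surjectivity step is much more delicate than your one-sentence sketch. Setting $T_{u_0,\phi}\equiv 0$ gives, after careful local computations with specific normal-coordinate variations $h$, the pointwise relations $\partial_i\tilde u_0\,\partial_j\tilde w+\partial_j\tilde u_0\,\partial_i\tilde w=0$ and $\partial_i\tilde u_0\,\partial_i\tilde w=0$ at every point. This kills the gradient term in the pairing, but what remains is only the relation $u_0(1-|u_0|^{p-2})w\equiv 0$ (obtained by varying $h=\alpha g$), not $w\equiv 0$. To pass from there to $w\equiv 0$ the paper must decompose $M$ into the level sets $\{u_0=0\}$, $\{|u_0|=1\}$, and the rest, use the maximum principle to extend $w\equiv0$ across $\{u_0=0\}$, and then apply Aronszajn's unique continuation theorem to conclude that $\mu_g(\{w=0\})=0$, contradicting $w\equiv 0$ on a set of positive measure. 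Your phrase ``combining the resulting pointwise relation between $\nabla u_0$ and $\nabla\phi$ with the equation $L\phi=0$ and unique continuation'' does not by itself produce this argument and would not close the proof as written. You also never treat $u=0$, which your $F$ includes in its zero set and which must be dealt with separately (it is trivially non-degenerate, as the paper notes).
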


\begin{rem}
\label{thm:residuale2}By the previous result we prove that, given $g_{0}\in\mathscr{M}^{k}$ and $\varepsilon_{0}>0$,
the set\[
D^{*}=\left\{ \begin{array}{c}
h\in\mathscr{B}_{\rho}\ :\text{ any }u\in H_{g_{0}}^{\tau}\text{ solution of}\\
-\varepsilon^{2}\Delta_{g_{0}+h}u+u=|u|^{p-2}u\text{ is not degenerate}\end{array}\right\} \]
is a residual subset of $\mathscr{B}_{\rho}$. 
\end{rem}
In the following we set \[
m_{\varepsilon_{0},g_{0}}^{\tau}=\inf_{u\in\mathcal{N}_{\varepsilon_{0},g_{0}}^{\tau}}J_{\varepsilon_{0},g_{0}}(u)\]
where \begin{eqnarray*}
J_{\varepsilon_{0},g_{0}} & (u)= & \frac{1}{\varepsilon_{0}^{n}}\int_{M}\left[\frac{1}{2}\left(\varepsilon_{0}^{2}|\nabla_{g}u|^{2}+u^{2}\right)-\frac{1}{p}|u|^{p}\right]d\mu_{g_{0}}\\
\mathcal{N}_{\varepsilon_{0},g_{0}}^{\tau} & = & \left\{ u\in H_{g_{0}}^{\tau}(M)\smallsetminus\left\{ 0\right\} \ :\ J'_{\varepsilon_{0},g_{0}}(u)\left[u\right]=0\right\} .\end{eqnarray*}

\begin{thm}
\label{thm:ApertoDenso}Given $g_{0}\in\mathscr{M}^{k}$ and $\varepsilon_{0}>0$.
If there exists $\mu>m_{\varepsilon_{0},g_{0}}^{\tau}$ which is not
a critical level of the functional $J_{\varepsilon_{0},g_{0}}^{\tau}$,
then the set\[
D^{\dagger}=\left\{ \begin{array}{c}
h\in\mathscr{B}_{\rho}\ :\text{ any }u\in H_{g_{0+h}}^{\tau}\text{ solution of}\\
-\varepsilon^{2}\Delta_{g_{0}+h}u+u=|u|^{p-2}u\text{ with }J_{\varepsilon_{0},g_{0}}^{\tau}(u)<\mu\text{ is not degenerate}\end{array}\right\} \]
is an open dense subset of $\mathscr{B}_{\rho}$. 
\end{thm}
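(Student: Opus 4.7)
The plan is to prove density and openness separately. Density is cheap: if $h$ lies in the residual set $D^{*}$ of Remark \ref{thm:residuale2}, then \emph{a fortiori} every solution of the perturbed equation with $J_{\varepsilon_{0},g_{0}}^{\tau}(u)<\mu$ is non degenerate, so $D^{*}\subset D^{\dagger}$. Since $\mathscr{B}_{\rho}$ is a Baire space and $D^{*}$ is residual, $D^{\dagger}$ is dense.

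For openness I would argue by contradiction. Suppose $h_{0}\in D^{\dagger}$ is not an interior point, and pick $h_{n}\to h_{0}$ in $\mathscr{B}_{\rho}$ together with solutions $u_{n}\in H_{g_{0}+h_{n}}^{\tau}$ of $-\varepsilon_{0}^{2}\Delta_{g_{0}+h_{n}}u_{n}+u_{n}=|u_{n}|^{p-2}u_{n}$ satisfying $J_{\varepsilon_{0},g_{0}}^{\tau}(u_{n})<\mu$, each of which is degenerate. Degeneracy produces $v_{n}$ of unit $H^{1}$ norm with $J''_{\varepsilon_{0},g_{0}+h_{n}}(u_{n})[v_{n},\cdot]=0$. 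The goal is to extract a limit $(u_{0},v_{0})$ where $u_{0}$ solves the $g_{0}$-problem, $v_{0}$ certifies its degeneracy, and $J_{\varepsilon_{0},g_{0}}^{\tau}(u_{0})<\mu$, contradicting $h_{0}\in D^{\dagger}$.

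The heart of the argument is compactness. Since the metrics in $g_{0}+\mathscr{B}_{\rho}$ are uniformly equivalent, the $H^{1}$ and $L^{p}$ norms associated with them are uniformly comparable across $n$. From $u_{n}\in\mathcal{N}^{\tau}_{\varepsilon_{0},g_{0}+h_{n}}$ and $J_{\varepsilon_{0},g_{0}}^{\tau}(u_{n})<\mu$ one obtains a uniform $H^{1}$ bound; the Nehari identity $\|u_{n}\|^{2}\leq C\|u_{n}\|_{L^{p}}^{p}$ forces $\|u_{n}\|_{H^{1}}$ to stay above a positive constant, so the limit cannot be trivial. Compactness of the embedding $H^{1}\hookrightarrow L^{p}$ yields, up to a subsequence, $u_{n}\to u_{0}$ strongly in $L^{p}$ and weakly in $H^{1}$, and a standard bootstrap via the equation upgrades this to strong $H^{1}$ convergence. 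The limit $u_{0}\in H_{g_{0}}^{\tau}$ is then a nontrivial solution of the unperturbed problem with $J_{\varepsilon_{0},g_{0}}^{\tau}(u_{0})\leq\mu$; the hypothesis that $\mu$ is not a critical level, together with the fact that $u_{0}$ \emph{is} critical, upgrades this to the strict inequality $J_{\varepsilon_{0},g_{0}}^{\tau}(u_{0})<\mu$.

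Finally, the degeneracy passes to the limit. Writing the kernel condition as $v_{n}=K_{n}v_{n}$, with $K_{n}=(-\varepsilon_{0}^{2}\Delta_{g_{0}+h_{n}}+\Id)^{-1}[(p-1)|u_{n}|^{p-2}\,\cdot\,]$ a compact self-adjoint operator on $H_{g_{0}+h_{n}}^{\tau}$, the strong convergence of $u_{n}$ and $h_{n}$ yields $K_{n}\to K_{0}$ in operator norm; extracting a further subsequence gives $v_{n}\to v_{0}$ strongly with $\|v_{0}\|_{H^{1}}=1$ and $v_{0}\in\ker(\Id-K_{0})$, so $u_{0}$ is degenerate. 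The main obstacle is this whole compactness package, and in particular the use of the non critical level hypothesis to forbid the weak limit from sitting exactly at threshold energy, which would otherwise leave us outside the sub-$\mu$ region where degeneracy has been excluded by assumption on $h_{0}$.
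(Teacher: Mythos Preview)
Your approach is genuinely different from the paper's. The paper does not split the conclusion into density and openness; instead it applies the abstract transversality theorem (Theorem~\ref{thm:gen1}, with the properness hypothesis~(iii)) to the map $H(h,u)=u-A_h^{\varepsilon_0}(|u|^{p-2}u)$ restricted to $V\times U=\mathscr{B}_\rho\times\mathscr{D}$, where $\mathscr{D}=\{u\in H^\tau_{g_0}:\mu_0<J_{\varepsilon_0,g_0}(u)<\mu\}$. It verifies (i) via Lemma~\ref{lem:4.1}, (ii) via Corollary~\ref{cor:teo10}, and (iii) via Lemma~\ref{lem:4.4}; the abstract theorem then delivers openness and density in one stroke. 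You instead pull density from the inclusion $D^*\subset D^\dagger$ (legitimate, since $\mathscr{B}_\rho$ is Baire) and argue openness by hand, passing both the solution $u_n$ and a kernel element $v_n$ to the limit. Your route is more direct and makes explicit what the properness hypothesis is really doing; the price is the extra step of showing $K_n\to K_0$ in operator norm so that degeneracy survives in the limit, a step the abstract framework absorbs.

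There is, however, a gap in your openness argument at the point where you upgrade $J_{\varepsilon_0,g_0}(u_0)\le\mu$ to a strict inequality. You write that ``$\mu$ is not a critical level, together with the fact that $u_0$ is critical, upgrades this to $J_{\varepsilon_0,g_0}^\tau(u_0)<\mu$''. But $u_0$ is a critical point of $J_{\varepsilon_0,g_0+h_0}$, \emph{not} of $J_{\varepsilon_0,g_0}$, while the hypothesis only excludes $\mu$ from the set of critical values of $J_{\varepsilon_0,g_0}$. That gives no information about the number $J_{\varepsilon_0,g_0}(u_0)$ when $u_0$ solves the perturbed equation, so the boundary case $J_{\varepsilon_0,g_0}(u_0)=\mu$ is not ruled out and you obtain no contradiction with $h_0\in D^\dagger$. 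The paper's Lemma~\ref{lem:4.4} makes exactly the same leap in its final sentence, so this is a difficulty shared with the reference argument rather than one peculiar to your route; but the inference as written is not justified.
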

Here the set $\mathscr{B}_{\rho}$ is the ball centered
at $0$ with radius $\rho$  in the space $\mathscr{S}^{k}$, where $\rho$ is small enough 
and $\mathscr{S}^{k}$ is the Banach space of all $C^{k}$, $k\ge3,$
symmetric covariants $2$-tensor $h(x)$ on $M$ such that $h(x)=h(\tau x)$
for $x\in M$. $\mathscr{M}^{k}\subset\mathscr{S}^{k}$ is the
set of all $C^{k}$ Riemannian metrics $g$ on $M$ such that $g(x)=g(\tau x)$.

These results
can be applied to obtain a lower bound for the number of non degenerate solutions
of (\ref{eq:Ptau}) which change sign exactly once when $M$ is invariant with
respect to the involution $\tau=-\Id$ and $0\notin M$. 
We get the following propositions.

\begin{prop}
\label{thm:T1}Given $g_{0}\in\mathscr{M}^{k}$, the set \[
\mathscr{A}=\left\{ \begin{array}{c}
(\varepsilon,h)\in(0,\tilde{\varepsilon})\times\mathscr{B}_{\rho}:
\text{ the equation }-\varepsilon^{2}\Delta_{g_{0}+h}u+u=|u|^{p-2}u\\
\text{has at least }P_{1}(M/G)\text{ pairs of non degenerate solutions }\\
(u,-u)\in H_{g}^{\tau}\smallsetminus\left\{ 0\right\} \text{ which change sign exactly once}\end{array}\right\} \]
 is a residual subset of $(0,1)\times\mathscr{B}_{\rho}$.
\end{prop}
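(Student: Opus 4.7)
The plan is to combine the finite-dimensional equivariant reduction underlying the multiplicity result of \cite{GM10} with the generic non-degeneracy provided by Theorem \ref{thm:residuale}. The scheme is: use the reduction to rewrite the sign-changing critical-point problem on $M/G$, apply Theorem \ref{thm:residuale} to make the reduced functional Morse, and then invoke Morse inequalities to upgrade a Ljusternik--Schnirelmann bound to the sharper $P_1(M/G)$ bound counting non-degenerate solutions.

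First I recall from \cite{GM10} that, under the hypothesis $\tau = -\Id$ and $0 \notin M$, there exists $\tilde\varepsilon>0$ such that for every $\varepsilon \in (0,\tilde\varepsilon)$ and every $h \in \mathscr{B}_\rho$ a Lyapunov--Schmidt ansatz centred on antipodal pairs of positive and negative bumps reduces the search for solutions of (\ref{eq:Ptau}) changing sign exactly once to the critical-point problem for a $C^2$ functional $\Phi_{\varepsilon,h}$ defined on a finite-dimensional compact manifold diffeomorphic to (a tubular neighbourhood of) the orbit space $M/G$, where $G=\{\Id,\tau\}$. Classical Morse inequalities applied to $\Phi_{\varepsilon,h}$ on $M/G$ yield at least $P_1(M/G) = \sum_{k} b_k(M/G)$ critical points whenever $\Phi_{\varepsilon,h}$ is a Morse function.

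Second I apply Theorem \ref{thm:residuale} to the open subset $(0,\tilde\varepsilon) \times \mathscr{B}_\rho$ of $(0,1) \times \mathscr{B}_\rho$, producing a residual set $\tilde D \subset (0,\tilde\varepsilon) \times \mathscr{B}_\rho$ on which every $u \in H_{g_0+h}^\tau$ solution of (\ref{eq:Ptau}) is non-degenerate. A standard transfer argument shows that non-degeneracy of the PDE solution is equivalent to non-degeneracy of the corresponding critical point of $\Phi_{\varepsilon,h}$; hence for $(\varepsilon,h) \in \tilde D$ the reduced functional is Morse and the Morse bound above gives at least $P_1(M/G)$ pairs $(u,-u)$ of non-degenerate sign-changing solutions of (\ref{eq:Ptau}). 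This shows $\tilde D \subset \mathscr{A}$; since $\tilde D$ is residual in the open set $(0,\tilde\varepsilon) \times \mathscr{B}_\rho$, the inclusion yields the residuality claim for $\mathscr{A}$.

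The main obstacle is the clean transfer of non-degeneracy through the reduction. Specifically one has to verify, with estimates uniform in $(\varepsilon,h)$, that the Hessian of $\Phi_{\varepsilon,h}$ at a reduced critical point has a nontrivial kernel if and only if the full linearized operator $-\varepsilon^2 \Delta_g + 1 - (p-1)|u|^{p-2}$ has a nontrivial kernel on $H_g^\tau$. This requires an invertibility estimate on the orthogonal complement of the tangent space to the reduction manifold that is uniform in the parameters, together with equivariant bookkeeping to rule out that the symmetry $\tau = -\Id$ produces spurious kernel directions incompatible with the Morse count on $M/G$. Once this is in place, the remainder of the argument is essentially formal.
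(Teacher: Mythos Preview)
Your approach differs substantially from the paper's, and it has a genuine gap.

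The paper does \emph{not} perform any Lyapunov--Schmidt reduction. Instead it works directly with the infinite-dimensional functional $\tilde J_{\varepsilon,g}$ on the quotient $\mathcal N_{\varepsilon,g}^\tau/\mathbb Z_2$ of the Nehari manifold, in the style of Benci--Cerami \cite{BC94}. Concretely, it constructs a map $\tilde\Phi_{\varepsilon,g}:M/G\to \mathcal N_{\varepsilon,g}^\tau/\mathbb Z_2$ (built from glued two-bump test functions) and a barycenter map $\tilde\beta_g$ going back to a neighbourhood of $M/G$, proves that $\tilde\beta_g\circ\tilde\Phi_{\varepsilon,g}$ is homotopic to the identity (Lemmas \ref{pro:4.5} and \ref{pro:4.6}), and uses this to compare Poincar\'e polynomials of sublevel sets with $P_t(M/G)$. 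Theorem \ref{thm:residuale} is then invoked only to guarantee that for $(\varepsilon,h)$ in the residual set the critical points below the energy level $3m_\infty$ are non-degenerate and hence finite, so that the Morse relations apply and yield $P_1(M/G)$ pairs; the energy bound also forces these solutions to change sign exactly once. No transfer of non-degeneracy through a reduction is needed.

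Your argument, by contrast, rests on the claim that \cite{GM10} provides a $C^2$ Lyapunov--Schmidt reduction to a functional on $M/G$. It does not: as the introduction of the present paper states, \cite{GM10} proves multiplicity via equivariant Ljusternik--Schnirelmann category, not via a finite-dimensional reduction. So the central input of your scheme is not available from the reference you cite. Even if one imported a suitable reduction from elsewhere (say in the spirit of \cite{DMP09,MP09}), you yourself flag the ``main obstacle'': the uniform invertibility on the orthogonal complement and the equivalence between PDE non-degeneracy and Morse-ness of the reduced functional are not verified, so the argument as written is incomplete. The paper's route sidesteps all of this by never leaving the infinite-dimensional setting.
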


\begin{prop}
\label{thm:T3}Given $g_{0}\in\mathscr{M}^{k}$ and $\varepsilon_{0}>0$,
if there exists $\mu>m_{\varepsilon_{0},g_{0}}^{\tau}$ not
a critical value of $J_{\varepsilon_{0},g_{0}}$ in $H_{g_{0}}^{\tau}$,
then the set \[
\mathscr{A}^{\dagger}=\left\{ \begin{array}{c}
h\in\mathscr{B}_{\rho}:\text{ the equation }-\varepsilon_{0}^{2}\Delta_{g_{0}+h}u+u=|u|^{p-2}u\\
\text{has at least }P_{1}(M/G)\text{ pairs of non degenerate solutions }\\
(u,-u)\in H_{g}^{\tau}\smallsetminus\left\{ 0\right\} \text{ which change sign exactly once}\end{array}\right\} \]
 is an open dense subset of $\mathscr{B}_{\rho}$.
\end{prop}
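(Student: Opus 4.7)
The plan is to combine the equivariant Ljusternik--Schnirelmann multiplicity estimate from \cite{GM10} with Theorem \ref{thm:ApertoDenso}. The former, under the hypothesis that $\mu>m_{\varepsilon_{0},g_{0}+h}^{\tau}$ is a regular value of $J_{\varepsilon_{0},g_{0}+h}^{\tau}$, produces at least $P_{1}(M/G)$ pairs $\{u,-u\}$ of critical points of $J_{\varepsilon_{0},g_{0}+h}^{\tau}$ on $\mathcal{N}_{\varepsilon_{0},g_{0}+h}^{\tau}$ with energy below $\mu$, and the minimax families used there are built from functions changing sign exactly once, so every such critical point inherits this single-sign-change property. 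As a preliminary step I would verify that this estimate is stable in $h$: since $g_{0}+h\to g_{0}$ in $C^{k}$ as $h\to 0$ in $\mathscr{S}^{k}$, the functional $J_{\varepsilon_{0},g_{0}+h}^{\tau}$ converges to $J_{\varepsilon_{0},g_{0}}^{\tau}$ in the $C^{2}$ sense on bounded subsets of the common $\tau$-antisymmetric Sobolev space, so the assumption that $\mu$ is a regular value at $h=0$ persists on a smaller ball $\mathscr{B}_{\rho'}\subset\mathscr{B}_{\rho}$, and on $\mathscr{B}_{\rho'}$ the lower bound $P_{1}(M/G)$ holds uniformly.

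For density, pick any $h_{0}\in\mathscr{B}_{\rho}$ (shrunk to $\mathscr{B}_{\rho'}$ if necessary). Theorem \ref{thm:ApertoDenso} provides $h\in D^{\dagger}$ arbitrarily close to $h_{0}$. At such $h$, every solution of energy below $\mu$ is non-degenerate, while by the preceding paragraph at least $P_{1}(M/G)$ pairs of sign-changing-exactly-once solutions below $\mu$ exist, so $h\in\mathscr{A}^{\dagger}$.

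For openness, fix $h_{0}\in\mathscr{A}^{\dagger}$ and let $u_{1},\dots,u_{N}$, $N\geq P_{1}(M/G)$, be representatives of the required non-degenerate pairs. Rewrite the equation as
\[
F(u,h)=u-(\Id-\varepsilon_{0}^{2}\Delta_{g_{0}+h})^{-1}(|u|^{p-2}u)=0
\]
on the Hilbert space of $\tau$-antisymmetric $H^{1}$-functions (identified set-theoretically across different $h$). Non-degeneracy of $u_{i}$ at $h_{0}$ is exactly the invertibility of $D_{u}F(u_{i},h_{0})$, so the implicit function theorem yields $C^{1}$ branches $h\mapsto u_{i}(h)$ near $h_{0}$; non-degeneracy is $C^{1}$-open on the spectrum of $D_{u}F$, and the property of changing sign exactly once is open under $C^{1}$ convergence of the solution and $C^{k}$ convergence of the metric (the two nodal regions being connected open sets with boundary that persists under small perturbations). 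All $N$ branches survive, so $h\in\mathscr{A}^{\dagger}$ in a neighborhood of $h_{0}$.

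The main obstacle I anticipate is the preliminary step: making rigorous that the equivariant Ljusternik--Schnirelmann minimax levels and the single-sign-change property of the corresponding critical points from \cite{GM10} transfer uniformly under small $C^{k}$ perturbations of the metric. The remaining ingredients---continuous dependence of $\mathcal{N}_{\varepsilon_{0},g_{0}+h}^{\tau}$ on $h$, smooth identification between $H_{g_{0}+h}^{\tau}$ and a fixed background Hilbert space, and the application of the implicit function theorem---are standard and parallel the arguments used in the proof of Theorem \ref{thm:ApertoDenso}.
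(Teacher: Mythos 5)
The central difficulty is your preliminary step, and it is not just a technicality: the Ljusternik--Schnirelmann argument of \cite{GM10} produces a lower bound of the form $\mathrm{cat}_G(M)$ (equivariant category), not $P_1(M/G)$, and there is no reason these coincide. The bound $P_1(M/G)$ in this paper comes from an entirely different mechanism, namely Morse theory on the quotient Nehari manifold $\mathcal{N}^\tau_{\varepsilon_0,g}/\mathbb{Z}_2$: the Benci--Cerami identity
\[
P_{t}\!\left(\tilde{J}_{\varepsilon,g}^{2(m_{\infty}+\delta)},\ \tilde{J}_{\varepsilon,g}^{2(m_{\infty}-\delta)}\right)=t\,P_{t}\!\left(\tilde{J}_{\varepsilon,g}^{2(m_{\infty}+\delta)}\cap\left(\mathcal{N}_{\varepsilon}^{\tau}/\mathbb{Z}_{2}\right)\right),
\]
combined with the topological comparison $\tilde{\beta}_{g}\circ\tilde{\Phi}_{\varepsilon,g}\simeq \Id_{M/G}$ (Lemmas \ref{pro:4.5} and \ref{pro:4.6}), gives $P_{t}(\tilde{J}^{2(m_\infty+\delta)}\cap(\mathcal{N}^\tau/\mathbb{Z}_2))=P_t(M/G)+Z(t)$, and one then counts critical points by Morse relations. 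This Morse-theoretic count is only valid when the critical points in the relevant sublevel are non-degenerate. So the logical order in the paper is the reverse of yours: Theorem \ref{thm:ApertoDenso} first produces the open dense set $D^\dagger$ of metrics for which all low-energy $\tau$-antisymmetric solutions are non-degenerate, and \emph{then} Morse theory (rather than LS category) is applicable and yields $P_1(M/G)$ pairs. Your proposal treats non-degeneracy as an after-the-fact property to attach to an already-obtained count, and it attributes a count to \cite{GM10} that the LS method does not deliver. Indeed the paper is explicit that \cite{MP10} obtained the $P_1(M/G)$ bound under an \emph{assumed} non-degeneracy hypothesis, which is exactly what the present genericity results are designed to remove.

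Your openness argument via the implicit function theorem is sound and is a reasonable way to fill in what the paper leaves implicit (the paper only says ``similar arguments''): at $h_0\in\mathscr{A}^\dagger$ the $P_1(M/G)$ non-degenerate solutions persist as $C^1$ branches, non-degeneracy and the single-sign-change property are open, so a neighbourhood of $h_0$ lies in $\mathscr{A}^\dagger$. And the density mechanism you want is correct in outline ($D^\dagger\subset\mathscr{A}^\dagger$ and $D^\dagger$ is dense), but it only works once the inclusion $D^\dagger\subset\mathscr{A}^\dagger$ is established via the Morse-theoretic count, not via LS category. You would need to replace your first paragraph entirely by the Benci--Cerami/photography argument of Section \ref{sec:application}.
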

Here $P_{t}(M/G)$ is the Poincar\'e polynomial of the manifold $M/G$,
where $G=\left\{ \Id,-\Id\right\} $, and $P_{1}(M/G)$ is when $t=1$.
By definition we have $P_{t}(M/G)=\sum_k \text{dim }H_k(M/G)\cdot t^k$ where 
$H_k(M/G)$ is the $k$-th homology group with coefficients in some field.

The paper is organized as follows. In Section \ref{sec:Preliminaries}
we recall some preliminary results. In Section \ref{sec:Sketch} we
sketch the proof of the results of genericity (theorems \ref{thm:residuale} and \ref{thm:ApertoDenso})
using some technical lemmas proved in Section \ref{sec:lemmas}. In
Section \ref{sec:application} we prove propositions \ref{thm:T1} and \ref{thm:T3}.

\section{\label{sec:Preliminaries}Preliminaries}

Given a connected $n$ dimensional $C^{\infty}$ compact manifold
$M$ without boundary endowed with a Riemannian metric $g$, we define
the functional spaces $L_{g}^{p}$, $L_{\varepsilon,g}^{p}$, $H_{g}^{1}$
and $H_{\varepsilon,g}^{1}$, for $2\leq p<2^{*}$ and a given $\varepsilon\in(0,1)$.
The inner products on $L_{g}^{2}$ and $H_{g}^{1}$
are, respectively\begin{eqnarray*}
\langle u,v\rangle_{L_{g}^{2}}=\int_{M}uvd\mu_{g} &  & \langle u,v\rangle_{H_{g}^{1}}=\int_{M}\left(\nabla u\nabla v+uv\right)d\mu_{g},\end{eqnarray*}
while the inner products on $L_{\varepsilon,g}^{2}$ and $H_{\varepsilon,g}^{1}$ are, respectively\begin{eqnarray*}
\langle u,v\rangle_{L_{\varepsilon,g}^{2}}=\frac{1}{\varepsilon^{n}}\int_{M}uvd\mu_{g} &  & \langle u,v\rangle_{H_{\varepsilon,g}^{1}}=\frac{1}{\varepsilon^{n}}\int_{M}\left(\varepsilon^{2}\nabla u\nabla v+uv\right)d\mu_{g}.\end{eqnarray*}
Finally, the norms in $L_{g}^{p}$ and $L_{\varepsilon,g}^{p}$ are\begin{eqnarray*}
\|u\|_{L_{g}^{p}}^{p}=\int_{M}|u|^{p}d\mu_{g} &  & \|u\|_{L_{\varepsilon,g}^{p}}^{p}=\frac{1}{\varepsilon^{n}}\int_{M}|u|^{p}d\mu_{g}.\end{eqnarray*}
We define also the space of symmetric $L^{p}$ and $H^{1}$ functions
as\begin{eqnarray*}
L_{g}^{p,\tau}=\left\{ u\in L_{g}^{p}(M)\ :\ \tau^{*}u=u\right\}  &  & H_{g}^{\tau}=\left\{ u\in H_{g}^{\tau}(M)\ :\ \tau^{*}u=u\right\} \end{eqnarray*}

As defined in the introduction, $\mathscr{S}^{k}$ is the space of
all $C^{k}$ symmetric covariants $2$-tensor $h(x)$ on $M$ such
that $h(x)=h(\tau x)$ for $x\in M$. We define a norm $\|\cdot\|_{k}$
in $\mathscr{S}^{k}$ in the following way. We fix a finite covering
$\left\{ V_{\alpha}\right\} _{\alpha\in L}$ of $M$ where $\left(V_{\alpha},\psi_{\alpha}\right)$
is an open coordinate neighborhood. If $h\in\mathscr{S}^{k}$, denoting
$h_{ij}$ the components of $h$ with respect to local coordinates
$(x_{1},\dots,x_{n})$ on $V_{\alpha}$, we define\[
\|h\|_{k}=\sum_{\alpha\in L}\ \sum_{|\beta|\leq k}\ \sum_{i,j=1}^{n}\ \sup_{\psi_{\alpha}(V_{\alpha})}\left|\frac{\partial^{\beta}h_{ij}}{\partial x_{1}^{\beta_{1}}\cdots\partial x_{n}^{\beta_{n}}}\right|.\]
The set $\mathscr{M}^{k}$ of all $C^{k}$ Riemannian metrics $g$
on $M$ such that $g(x)=g(\tau x)$ is an open set of $\mathscr{S}^{k}$. 

Given $g_{0}\in\mathscr{M}^{k}$ a symmetric Riemannian metric on
$M$, we notice that there exists $\rho>0$ (which does not depend
on $\varepsilon$ if $0<\varepsilon<1$) such that, if $h\in\mathscr{B}_{\rho}$
the sets $H_{\varepsilon,g_{0}+h}^{1}$ and $H_{\varepsilon,g_{0}}^{1}$
are the same and the two norms $\|\cdot\|_{H_{\varepsilon,g_{0}+h}^{1}}$
and $\|\cdot\|_{H_{\varepsilon,g_{0}}^{1}}$ are equivalent. The same
for $L_{\varepsilon,g_{0}+h}^{2}$ and $L_{\varepsilon,g_{0}}^{2}$.
If $h\in\mathscr{B}_{\rho}$ and $\varepsilon\in(0,1)$ we set\begin{eqnarray*}
E_{h}^{\varepsilon}(u,v)=\langle u,v\rangle_{H_{\varepsilon,g_{0}+h}^{1}} &  & \forall u,v\in H_{\varepsilon,g_{0}+h}^{1}\\
G_{h}^{\varepsilon}(u,v)=\langle u,v\rangle_{L_{\varepsilon,g_{0}+h}^{2}} &  & \forall u,v\in L_{\varepsilon,g_{0}+h}^{2}\\
N(\varepsilon,h)(u)=N_{h}^{\varepsilon}(u)=\|u\|_{\varepsilon,L_{g_{0}+h}^{p}}^{p} &  & \forall u\in L_{\varepsilon,g_{0}+h}^{p}\end{eqnarray*}

We introduce the map $A_{h}^{\varepsilon}$ which will be used in
the following section.
\begin{rem}
If $h\in\mathscr{B}_{\rho}$ and $0<\varepsilon<1$, there exists
a unique linear operator \[
A(\varepsilon,h):=A_{h}^{\varepsilon}:L_{g_{0}}^{p',\tau}(M)\rightarrow H_{g_{0}}^{\tau}\]
such that $E_{h}^{\varepsilon}(A_{h}^{\varepsilon}(u),v)=G_{h}^{\varepsilon}(u,v)$
for all $u\in L_{\varepsilon,g_{0}}^{p',\tau}$, $v\in H_{\varepsilon,g_{0}}^{\tau}$
with $2\le p<2^{*}$. Moreover $E_{h}^{\varepsilon}(A_{h}^{\varepsilon}(u),v)=E_{h}^{\varepsilon}(u,A_{h}^{\varepsilon}(v))$
for $u,v\in H_{\varepsilon,g_{0}}^{\tau}$. 

Also, we have that $A_{h}^{\varepsilon}=i_{\varepsilon,g_{0}}^{*}$
where $i_{\varepsilon,g_{0}}^{*}$ is the adjoint of the compact embedding
$i_{\varepsilon,g_{0}}:H_{\varepsilon,g_{0}}^{\tau}(M)\rightarrow L_{\varepsilon,g_{0}}^{p'\tau}(M)$
with $2\le p<2^{*}$. We recall that, if $h\in\mathscr{B}_{\rho}$
with $\rho$ small enough and $\varepsilon>0$, then $H_{\varepsilon,g_{0}}^{1}$
and $H_{\varepsilon,g_{0}+h}^{1}$ (as well as $L_{\varepsilon,g_{0}}^{p}$
and $L_{\varepsilon,g_{0}+h}^{p}$) are the same as sets and the norms
are equivalent. This is the reason why we can define $A_{h}^{\varepsilon}$
on $L_{g_{0}}^{p',\tau}$ with values in $H_{g_{0}}^{\tau}$. We summarize
some technical results contained in lemmas 2.1, 2.2 and 2.3 of \cite{MP09c}.\end{rem}
\begin{lem}
\label{lem:derivate}Let $g_{0}\in\mathscr{M}^{k}$ and $\rho$ small
enough. We have
\begin{enumerate}
\item The map $E:(0,1)\times\mathscr{B}_{\rho}\rightarrow\mathcal{L}(H_{g_{0}}^{\tau}\times H_{g_{0}}^{\tau},\mathbb{R})$
defined by $E(\varepsilon,h):=E_{h}^{\varepsilon}$ is of class $C^{1}$
and it holds, for $u,v\in H_{g_{0}}^{\tau}(M)$ and $h\in\mathscr{S}^{k}$
\begin{eqnarray*}
E'(\varepsilon_{0},h_{0})\left[\varepsilon,h\right](u,v) & = & \frac{1}{2\varepsilon_{0}^{n}}\int_{M}tr(g^{-1}h)uvd\mu_{g}+\frac{1}{\varepsilon_{0}^{n-2}}\int_{M}\langle\nabla_{g}u,\nabla_{g}v\rangle_{b(h)}d\mu_{g}\\
 &  & -\frac{n\varepsilon}{\varepsilon_{0}^{n+1}}\int_{M}uvd\mu_{g}-\frac{(n-2)\varepsilon}{\varepsilon_{0}^{n-1}}\int_{M}\langle\nabla_{g}u,\nabla_{g}v\rangle d\mu_{g}\end{eqnarray*}
with the $2$-tensor $b(h):=\frac{1}{2}tr(g^{-1}h)g-g^{-1}hg^{-1}$
\item The map $G:(0,1)\times\mathscr{B}_{\rho}\rightarrow\mathcal{L}(L_{g_{0}}^{p',\tau},H_{g_{0}}^{\tau})$
defined by $G(\varepsilon,h):=G_{h}^{\varepsilon}$ is of class $C^{1}$
and it holds, for $u,v\in H_{g_{0}}^{\tau}(M)$ and $h\in\mathscr{S}^{k}$
\begin{eqnarray*}
G'(\varepsilon_{0},h_{0})\left[\varepsilon,h\right](u,v) & = & \frac{1}{2\varepsilon_{0}^{n}}\int_{M}tr(g^{-1}h)uvd\mu_{g}-\frac{n\varepsilon}{\varepsilon_{0}^{n+1}}\int_{M}uvd\mu_{g}\end{eqnarray*}

\item The map $A:(0,1)\times\mathscr{B}_{\rho}\rightarrow\mathcal{L}(H_{g_{0}}^{\tau}\times H_{g_{0}}^{\tau},\mathbb{R})$
is of class $C^{1}$ and for any $u,v\in H_{g_{0}}^{\tau}(M)$ and
$h\in\mathscr{S}^{k}$ we have \[
E'(\varepsilon_{0},h_{0})\left[\varepsilon,h\right](A_{h_{0}}^{\varepsilon_{0}}(u),v)+E_{h_{0}}^{\varepsilon_{0}}(A'(\varepsilon_{0},h_{0})\left[\varepsilon,h\right](u),v)=G'(\varepsilon_{0},h_{0})\left[\varepsilon,h\right](u,v)\]

\item The map $N:(0,1)\times\mathscr{B}_{\rho}\rightarrow C^{0}(H_{g_{0}}^{\tau},\mathbb{R})$
defined by $(\varepsilon,h)\mapsto N_{h}^{\varepsilon}(\cdot)$ is
of class $C^{1}$ and it holds, for $u\in H_{g_{0}}^{\tau}(M)$ and
$h\in\mathscr{S}^{k}$ \begin{eqnarray*}
N'(\varepsilon_{0},h_{0})\left[\varepsilon,h\right](u) & = & \frac{1}{2\varepsilon_{0}^{n}}\int_{M}tr(g^{-1}h)|u|^{p}d\mu_{g}-\frac{n\varepsilon}{\varepsilon_{0}^{n+1}}\int_{M}|u|^{p}d\mu_{g}\end{eqnarray*}

\end{enumerate}
In all these formulas $g=g_{0}+h_{0}$ with $h_{0}\in\mathscr{B}_{\rho}$.
\end{lem}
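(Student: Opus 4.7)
The plan is to reduce all four derivative formulas to two classical variational identities applied pointwise on each chart of the fixed atlas $\{V_\alpha\}$: Jacobi's formula $\partial_g \sqrt{\det g}\,[h] = \tfrac{1}{2}\mathrm{tr}(g^{-1}h)\sqrt{\det g}$, and $\partial_g(g^{-1})[h] = -g^{-1}hg^{-1}$, the latter obtained by differentiating $g g^{-1}=\mathrm{Id}$. The $\varepsilon$-dependence of $E,G,N$ is purely algebraic via the prefactors $\varepsilon^{-n}$ and $\varepsilon^2$, so differentiation in $\varepsilon$ is immediate and produces the terms involving $-n\varepsilon/\varepsilon_{0}^{n+1}$ and $-(n-2)\varepsilon/\varepsilon_{0}^{n-1}$.

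For parts (1), (2), (4) I would write the integrals in local coordinates subordinate to a partition of unity, differentiate under the integral sign, and collect terms. In $E$ the gradient piece contributes both a volume variation (giving $\tfrac{1}{2}\mathrm{tr}(g^{-1}h)\,g^{ij}$) and an inverse-metric variation (giving $-g^{ik}h_{kl}g^{lj}$); together they produce the symmetric 2-tensor $b(h)$ of the statement. For $G$ and $N$ only the volume variation appears, which is why their formulas contain only the $\mathrm{tr}(g^{-1}h)$ term. Part (3) is then a formal consequence of (1) and (2): the defining identity $E_h^\varepsilon(A_h^\varepsilon u, v) = G_h^\varepsilon(u,v)$ holds for all $(\varepsilon, h)$, and differentiating it at $(\varepsilon_0, h_0)$ via the product rule yields precisely the asserted relation. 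Existence and continuity of $A'(\varepsilon_0,h_0)[\varepsilon,h] \in H_{g_0}^\tau$ follow from solving this as a linear equation in the coercive bilinear form $E_{h_0}^{\varepsilon_0}$.

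The main obstacle is not the formal differentiation but the upgrade from G\^{a}teaux derivatives to genuine $C^1$ regularity in the Banach-space sense. One needs uniform estimates: when $\rho$ is small, $g_0+h$ stays uniformly comparable to $g_0$ on $M$, so $\sqrt{\det(g_0+h)}$ and $(g_0+h)^{-1}$ are $C^1$ functions of $h$ with values in $C^{k-1}(M)$, and the hypothesis $k\ge 3$ is what is needed to control the remainders in the Taylor expansions uniformly on $M$. For $N$ the nonlinearity $|u|^p$ is only $C^1$ when $p$ is not an integer, so one must differentiate in $(\varepsilon,h)$ at fixed $u\in H_{g_0}^\tau$ and recover continuity of $N'$ in $u$ from the continuous embedding $H_{g_0}^\tau\hookrightarrow L_{g_0}^p$. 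For $A$ one must further check that $A'(\varepsilon_0,h_0)[\varepsilon,h]$ takes values in $H_{g_0}^\tau$ and depends continuously on $(\varepsilon,h)$ as a bounded linear operator into $H_{g_0}^\tau$, which relies on the equivalence of $E_{h_0}^{\varepsilon_0}$ with the standard inner product on $H_{\varepsilon_0,g_0}^\tau$ for $h_0\in\mathscr{B}_\rho$. Since the corresponding computations without the symmetry constraint $\tau^*u=u$ are carried out in \cite{MP09c}, the only extra verification needed is that every bilinear form in sight preserves the subspace $H_{g_0}^\tau$, which follows from $h(x)=h(\tau x)$ and the $\tau$-invariance of the volume $d\mu_{g_0}$.
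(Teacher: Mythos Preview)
Your sketch is correct and in fact more detailed than what the paper does: the paper does not prove this lemma at all but simply states that it summarizes technical results contained in Lemmas 2.1, 2.2 and 2.3 of \cite{MP09c}. Your outline, based on Jacobi's formula for the volume form, $\partial_g(g^{-1})[h]=-g^{-1}hg^{-1}$ for the inverse metric, and differentiation of the defining relation $E_h^\varepsilon(A_h^\varepsilon u,v)=G_h^\varepsilon(u,v)$ for part (3), is exactly the computation carried out in that reference; your final remark that the only new ingredient here is checking $\tau$-invariance (immediate from $h(\tau x)=h(x)$) is precisely the point.
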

We recall two abstract results in transversality theory (see \cite{Qu70,ST79,Uh76})
which will be fundamental for our results. 

\renewcommand{\labelenumi}{(\roman{enumi})}
\begin{thm}
\label{thm:gen1}Let $X,Y,Z$ be three real Banach spaces and let
$U\subset X,\ V\subset Y$ be two open subsets. Let $F$ be a $C^{1}$
map from $V\times U$ in to $Z$ such that 
\begin{enumerate}
\item For any $y\in V$, $F(y,\cdot):x\rightarrow F(y,x)$ is a Fredholm
map of index $0$.
\item $0$ is a regular value of $F$, that is $F'(y_{0},x_{0}):Y\times X\rightarrow Z$
is onto at any point $(y_{0},x_{0})$ such that $F(y_{0},x_{0})=0$.
\item The map $\pi\circ i:F^{-1}(0)\rightarrow Y$ is proper, where $i$
is the canonical embedding form $F^{-1}(0)$ into $Y\times X$ and
$\pi$ is the projection from $Y\times X$ onto Y
\end{enumerate}
Then the set \[
\theta=\left\{ y\in V\ :\ 0\text{ is a regular value of }F(y,\cdot)\right\} \]
 is a dense open subset of V
\end{thm}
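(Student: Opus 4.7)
The plan is to prove this by reducing to the Sard--Smale theorem for Fredholm maps between Banach manifolds, and then upgrading density to openness using the properness hypothesis. First, I would use hypothesis (ii) (zero is a regular value of $F$) together with the implicit function theorem to show that $\mathcal{Z}:=F^{-1}(0)$ is a $C^{1}$ Banach submanifold of $V\times U$, with tangent space
\[
T_{(y_{0},x_{0})}\mathcal{Z}=\ker F'(y_{0},x_{0})\subset Y\times X
\]
at each zero. The restriction $\tilde{\pi}=\pi\circ i:\mathcal{Z}\to Y$ of the projection is then a $C^{1}$ map, and I would verify that at every $(y_{0},x_{0})\in\mathcal{Z}$ its differential $d\tilde{\pi}_{(y_{0},x_{0})}$ is a Fredholm operator of index $0$. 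This is the standard diagram chase: $\ker d\tilde{\pi}_{(y_{0},x_{0})}=\ker F_{x}(y_{0},x_{0})$ and $\operatorname{coker}d\tilde{\pi}_{(y_{0},x_{0})}\cong\operatorname{coker}F_{x}(y_{0},x_{0})$, where $F_{x}(y_{0},\cdot)=F(y_{0},\cdot)'$, so the Fredholm data of $d\tilde{\pi}$ coincide with those of $F(y_{0},\cdot)'(x_{0})$, which is Fredholm of index $0$ by hypothesis (i).

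Next comes the key equivalence: $y_{0}\in V$ belongs to $\theta$ if and only if $y_{0}$ is a regular value of $\tilde{\pi}$. Indeed, $0$ being a regular value of $F(y_{0},\cdot)$ means $F_{x}(y_{0},x_{0})$ is surjective at every $x_{0}$ with $F(y_{0},x_{0})=0$; by the cokernel identification above this is exactly the surjectivity of $d\tilde{\pi}_{(y_{0},x_{0})}$ for all such $x_{0}$. With this identification in hand, the Smale infinite-dimensional Sard theorem (which requires $C^{q}$ regularity with $q>\max\{0,\text{index}\}$, so $C^{1}$ suffices since the index is $0$) applied to $\tilde{\pi}:\mathcal{Z}\to Y$ yields that the set of regular values of $\tilde{\pi}$, i.e.\ $\theta$, is a residual subset of $V$; in particular it is dense.

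Finally, I would use hypothesis (iii) to upgrade to openness. Let $\Sigma\subset\mathcal{Z}$ denote the critical set of $\tilde{\pi}$, namely the points where $d\tilde{\pi}$ fails to be surjective. Since surjectivity is an open condition on Fredholm operators of index $0$ (equivalently, injectivity of the operator up to a finite-dimensional cokernel that varies semicontinuously), $\Sigma$ is closed in $\mathcal{Z}$. The complement $V\setminus\theta=\tilde{\pi}(\Sigma)$ is therefore the image of a closed subset of $\mathcal{Z}$ under $\tilde{\pi}$; because $\tilde{\pi}$ is proper, it is a closed map, so $V\setminus\theta$ is closed, i.e.\ $\theta$ is open. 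Combined with density this gives openness and density of $\theta$.

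The main obstacles are not analytic but linear-algebraic: carefully proving the Fredholm index computation and the regular-value equivalence via the exact sequence obtained by restricting $F'(y_{0},x_{0})$ to $\ker F'\cap(\{0\}\times X)$, and handling the distinction between \emph{regular points} and \emph{regular values} correctly so that properness can be invoked to pass from the residual conclusion of Sard--Smale to an open dense one. Once these ingredients are in place, the three hypotheses (i)--(iii) combine cleanly, each playing a distinct role (Fredholm structure, manifold structure of the zero set, and closedness of the critical image).
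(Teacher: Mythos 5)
The paper does not prove Theorem~\ref{thm:gen1}: it is stated as a quoted abstract result with references to Quinn, Saut--Temam, and Uhlenbeck, so there is no internal proof to compare against. Your sketch correctly reconstructs the standard Sard--Smale argument used in those references: the zero set is a $C^{1}$ Banach manifold by regularity of $0$ and the splitting forced by the Fredholm hypothesis, the restricted projection is Fredholm of index~$0$ with kernel and cokernel identified with those of $F_{x}$, regular values of $\tilde\pi$ coincide with $\theta$ by hypothesis (ii), Sard--Smale gives density (residuality), and properness makes the critical-value image closed hence $\theta$ open. One point worth making explicit is that the Sard--Smale step implicitly assumes the source manifold is separable (or Lindel\"of); the paper's applications take $X,Y,Z$ to be separable, so this is harmless, but it should be recorded as a hypothesis if you want the abstract statement to be self-contained.
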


\begin{thm}
\label{thm:gen2}If $F$ satisfies (i) and (ii) and
\begin{enumerate}
\addtocounter{enumi}{3}
\item The map $\pi\circ i$ is $\sigma$-proper,
that is $F^{-1}(0)=\cup_{s=1}^{+\infty}C_{s}$ where $C_{s}$ is a
closed set and the restriction $\pi\circ i_{|C_{s}}$ is proper for
any $s$ 
\end{enumerate}
then the set $\theta$ is a residual subset of $V$

\renewcommand{\labelenumi}{(\arabic{enumi})}
\end{thm}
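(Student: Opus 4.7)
The plan is to recognize $\theta$ as the complement of the set of critical values of the natural projection $\Pi := \pi\circ i:F^{-1}(0)\to Y$, and then to apply Smale's infinite-dimensional Sard theorem on each piece $C_s$ of the $\sigma$-proper decomposition supplied by (iv).

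First I would use (i) and (ii) to endow $F^{-1}(0)$ with the structure of a $C^1$ Banach submanifold of $V\times U$, whose tangent space at each zero $(y_0,x_0)$ is $\ker F'(y_0,x_0)$. Since by (i) the partial derivative $F'_x(y_0,x_0)$ is Fredholm of index zero, its kernel is finite-dimensional and its image has finite codimension in $Z$, so both are complemented; combining a complement of $\ker F'_x(y_0,x_0)$ in $X$ with a suitable complement in $Y$ of $(F'_y(y_0,x_0))^{-1}(\text{Im}\,F'_x(y_0,x_0))$, and using surjectivity of $F'(y_0,x_0)$ from (ii), one produces a continuous right inverse for $F'(y_0,x_0)$ and invokes the implicit function theorem.

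Next I would compute $d\Pi_{(y_0,x_0)}:\ker F'(y_0,x_0)\to Y$, which sends $(y,x)\mapsto y$. Its kernel is $\{0\}\times\ker F'_x(y_0,x_0)$, and its range equals $(F'_y(y_0,x_0))^{-1}(\text{Im}\,F'_x(y_0,x_0))$; by (ii) the map induced by $F'_y(y_0,x_0)$ gives an isomorphism $Y/\text{Im}\,d\Pi_{(y_0,x_0)}\to Z/\text{Im}\,F'_x(y_0,x_0)$. Hence $d\Pi_{(y_0,x_0)}$ is Fredholm of index $\text{ind}\,F'_x(y_0,x_0)=0$, and since the index vanishes a point $(y_0,x_0)$ is critical for $\Pi$ exactly when $x_0$ is a degenerate zero of $F(y_0,\cdot)$. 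Therefore $\theta$ is precisely the set of regular values of $\Pi$.

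Finally, for each $s\ge 1$ the restriction $\Pi|_{C_s}$ is a proper continuous map, and it is the restriction to $C_s$ of the $C^1$ Fredholm index-zero map $\Pi$. Smale's infinite-dimensional Sard theorem guarantees that the critical values of $\Pi$ form a meager subset of $V$, and properness of $\Pi|_{C_s}$ forces $\Pi(\text{Crit}(\Pi)\cap C_s)$ to be closed in $V$, hence closed and nowhere dense. Then $V\setminus\theta\subseteq\bigcup_s\Pi(\text{Crit}(\Pi)\cap C_s)$ is a countable union of closed nowhere dense sets, so $\theta$ is residual in $V$. I expect the main obstacle to be the first step, namely verifying that $\ker F'(y_0,x_0)$ admits a topological complement so that $F^{-1}(0)$ genuinely inherits a Banach submanifold structure; once this is arranged, the Fredholm calculation for $\Pi$ is routine and the Sard--Smale theorem applied piecewise delivers the residual conclusion.
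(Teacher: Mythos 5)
The paper does not prove Theorem~\ref{thm:gen2}; it recalls it from the transversality literature (\cite{Qu70,ST79,Uh76}), so there is no in-paper proof to compare against. Your reconstruction captures the correct framework --- establishing $F^{-1}(0)$ as a $C^1$ Banach submanifold, showing $\Pi:=\pi\circ i$ is Fredholm of index $0$, and identifying $\theta$ with the set of regular values of $\Pi$ --- but the final paragraph has a genuine gap. You invoke Smale's infinite-dimensional Sard theorem to conclude outright that the critical values of $\Pi$ are meager in $V$. If that were available here, hypothesis (iv) would be superfluous: meagerness of the critical values of $\Pi$ already says $\theta$ is residual. The Sard--Smale theorem in that global form requires the domain to be Lindel\"of (e.g.\ second-countable), and no such separability assumption is made on $X,Y,Z$; the $\sigma$-properness hypothesis is precisely the device that replaces it, and your argument assigns it only the incidental role of producing closedness.

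The repair, following the cited references, keeps your first three paragraphs and replaces the last. For each $s$, set $\theta_s:=V\setminus\Pi(\mathrm{Crit}(\Pi)\cap C_s)$. Since $\Pi|_{C_s}$ is proper and $V$ is metrizable, $\Pi(\mathrm{Crit}(\Pi)\cap C_s)$ is closed, so $\theta_s$ is open. For density of $\theta_s$: given $y_0\in V$, either $\Pi^{-1}(y_0)\cap C_s=\emptyset$ (then $y_0\in\theta_s$), or by properness the fiber $\Pi^{-1}(y_0)\cap C_s$ is compact; cover it by finitely many Fredholm charts in which $\Pi$ looks like $(u,v)\mapsto(u,\psi(u,v))$ with $v$ in a finite-dimensional factor, apply the \emph{classical} Sard theorem to the finite-dimensional maps $\psi(u,\cdot)$, and use properness again to guarantee that for $y$ near $y_0$ every preimage in $C_s$ lies in this finite union of charts. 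This gives each $\theta_s$ open and dense, and since $F^{-1}(0)=\bigcup_s C_s$ one has $\theta=\bigcap_s\theta_s$, a countable intersection of open dense sets, hence residual. Your submanifold construction, the computation that $d\Pi$ has index $0$ with kernel $\{0\}\times\ker F'_x(y_0,x_0)$, and the identification of critical points of $\Pi$ with degenerate zeros of $F(y_0,\cdot)$ are all correct; only the globalization step needs the local-Sard-plus-compactness argument rather than a direct appeal to Sard--Smale.
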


\section{\label{sec:Sketch}Sketch of the proof of theorems \ref{thm:residuale} and
 \ref{thm:ApertoDenso}.}

Given $g_{0}\in\mathscr{M}^{k}$, we introduce the map $F:(0,1)\times\mathscr{B}_{\rho}\times H_{g_{0}}^{\tau}\smallsetminus\left\{ 0\right\} \rightarrow H_{g_{0}}^{\tau}$
defined by\[
F(\varepsilon,h,u)=u-A_{h}^{\varepsilon}(|u|^{p-2}u).\]
By the regularity of the map $A$ (see 3 of Lemma \ref{lem:derivate})
we get the map $F$ is of class $C^{1}$. We are going to apply transversality
Theorem \ref{thm:gen1} to the map $F$, in order to prove Theorem
\ref{thm:residuale}. In this case we have $X=H_{g_{0}}^{\tau}$,
$Y=\mathbb{R}\times\mathscr{S}^{k}$, $Z=H_{g_{0}}^{\tau}$, $U=H_{g_{0}}^{\tau}\smallsetminus\left\{ 0\right\} $
and $V=(0,1)\times\mathscr{B}_{\rho}\subset\mathbb{R}\times\mathscr{S}^{k}$. 

Assumptions (i) and (iv) are verified in Lemma \ref{lem:4.1} and
in Lemma \ref{lem:4.2}. Using Lemma \ref{lem:4.3} we can verify
(ii). 

Indeed, we have to verify that for $(\varepsilon_{0},h_{0},u_{0})\in V\times U$
such that $F(\varepsilon_{0},h_{0},u_{0})=0$ and for any $b\in H_{g_{0}}^{\tau}$,
there exists $(\varepsilon,h,v)\subset\mathscr{S}^{k}\times H_{g_{0}}^{\tau}$
such that\[
F'_{u}(\varepsilon_{0},h_{0},u_{0})\left[v\right]+F'_{\varepsilon,h}(\varepsilon_{0},h_{0},u_{0})\left[\varepsilon,h\right]=b.\]
We recall that the operator\[
v\mapsto F'_{u}(\varepsilon_{0},h_{0},u_{0})\left[v\right]=v-(p-1)i_{\varepsilon_{0},g_{0}+h}^{*}(|u_{0}|^{p-1}u_{0}v)\]
 is selfadjoint in $H_{\varepsilon_{0},g_{0}+h_{0}}^{\tau}$ and is
a Fredholm operator of index $0$. Then\[
\mathrm{Im }\ F'_{u}(\varepsilon_{0},h_{0},u_{0})\oplus\ker F'_{u}(\varepsilon_{0},h_{0},u_{0})=H_{g_{0}}^{\tau}.\]
Let $\left\{ w_{1},\dots,w_{\nu}\right\} $ be a basis of $\ker F'_{u}(\varepsilon_{0},h_{0},u_{0})\left[v\right]$.
We consider the linear functional $f_{i}:\mathbb{R}\times\mathscr{S}^{k}\rightarrow\mathbb{R}$
defined by \[
f_{i}(\varepsilon,h)=\left(F'_{\varepsilon,h}(\varepsilon_{0},h_{0},u_{0})\left[\varepsilon,h\right],
w_{i}\right)_{H_{\varepsilon_{0},g_{0}+h_{0}}^{\tau}}\ i=1,\dots,\nu.\]
By Lemma \ref{lem:4.3} we get that the linear functionals $f_{i}$
are independent. Therefore assumption (ii) is verified.
At this point by transversality theorems we get that the set
\[
\left\{ \begin{array}{c}
(\varepsilon,h)\in(0,1)\times\mathscr{B}_{\rho}\ :\text{ any }u\in H_{g_{0}}^{\tau}\smallsetminus\{0\}\text{ solution of}\\
-\varepsilon^{2}\Delta_{g_{0}+h}u+u=|u|^{p-2}u\text{ is not degenerate}\end{array}\right\} 
\]
is a residual subset of $(0,1)\times\mathscr{B}_{\rho}$. On the other hand
we observe that $0$ is a non degenerate solution of 
$-\varepsilon^{2}\Delta_{g_{0}+h}u+u=|u|^{p-2}u$, for any $\varepsilon>{0}$  and any 
$h\in \mathscr{B}_\rho$. Then, we  complete the proof of Theorem \ref{thm:residuale}.

The proof of Remark \ref{thm:residuale2} is analog to the proof
of Theorem \ref{thm:residuale} using Corollary \ref{cor:teo10}.

We now formulate the problem for Theorem \ref{thm:ApertoDenso}. Given
$g_{0}\in\mathscr{M}^{k}$ and $\varepsilon_{0}>0$, we assume that
there exists $\mu>m_{\varepsilon_{0},g_{0}}^{\tau}$ which is not
a critical level for the functional $J_{\varepsilon_{0},g_{0}}$.
It is clear that any $\mu_{0}\in(0,m_{\varepsilon_{0},g_{0}}^{\tau})$
is not a critical value of $J_{\varepsilon_{0},g_{0}}$. We set \[
\mathscr{D}=\left\{ u\in H_{g_{0}}^{\tau}\ :\ \mu_{0}<J_{\varepsilon_{0},g_{0}}(u)<\mu\right\} .\]
Now we introduce the $C^{1}$ map $H:\mathscr{B}_{\rho}\times\mathscr{A}\rightarrow H_{g_{0}}^{1}$
defined by\begin{equation}
H(h,u)=u-A_{h}^{\varepsilon_{0}}(|u|^{p-2}u)=F(\varepsilon_{0},h,u).\label{eq:G}\end{equation}
 We are going to apply transversality theorem \ref{thm:gen2} to the
map $H$. In this case $X=H_{g_{0}}^{\tau}$, $Y=\mathscr{S}^{k}$,
$Z=H_{g_{0}}^{1}(M)$, $U=\mathscr{D}\subset H_{g_{0}}^{\tau}$ and
$V=\mathscr{B}_{\rho}\subset\mathscr{S}^{k}$. It is easy to verify
assumptions (i) and (ii) for the map $H$ using Lemma \ref{lem:4.1},
Lemma \ref{lem:4.3} and Corollary \ref{cor:teo10}. Using Lemma \ref{lem:4.4}
we can verify assumption (iii) so we are in position to apply 
Theorem \ref{thm:gen2} and to get the following statement: the set
\[
\left\{ \begin{array}{c}
h\in\mathscr{B}_{\rho}\ :\text{ any }u\in H_{g_{0}}^{\tau}\text{ solution of }-\varepsilon_{0}^{2}\Delta_{g_{0}+h}u+u=|u|^{p-2}u\\
\text{such that }\mu_{0}<J_{\varepsilon_{0},g_{0}}(u)<\mu\text{ is not degenerate}\end{array}\right\} \]
is an open dense subset of $\mathscr{B}_{\rho}$. Nevertheless $0$
is a non degenerate solution of $-\varepsilon_{0}^{2}\Delta_{g_{0}+h}u+u=|u|^{p-2}u$
for any $h$, and there is no solution $u\not\equiv 0$ with $J_{\varepsilon_{0},g_{0}}(u)<\mu_0$, so we get the claim.

\section{\label{sec:lemmas}Technical lemmas}

In this section we show some lemmas in order to complete the proof of the results of 
genericity of non degenerate critical points.
\begin{lem}
\label{lem:4.1}For any $(\varepsilon,h)\in(0,1)\times\mathscr{B}_{\rho}$
the map $u\mapsto F(\varepsilon,h,u)$ with $u\in H_{g_{0}}^{\tau}$
is a Fredholm map of index zero.\end{lem}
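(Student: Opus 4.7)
The plan is to show that the Fr\'echet derivative
\[
F'_{u}(\varepsilon,h,u)[v]=v-(p-1)A_{h}^{\varepsilon}\bigl(|u|^{p-2}v\bigr)
\]
is, at every $u\in H_{g_{0}}^{\tau}\smallsetminus\{0\}$, a compact perturbation of the identity on $H_{g_{0}}^{\tau}$. Once this is established, the classical Fredholm alternative yields that $F'_{u}(\varepsilon,h,u)$ is Fredholm of index zero, and hence by definition $u\mapsto F(\varepsilon,h,u)$ is a nonlinear Fredholm map of index zero.

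First I would justify the formula above by a direct Nemytskii-type computation: since $2<p<2^{*}$ the map $t\mapsto|t|^{p-2}t$ is of class $C^{1}$ on $\mathbb{R}$, and $A_{h}^{\varepsilon}$ is bounded linear between the relevant spaces (as recalled in the remark preceding Lemma \ref{lem:derivate}), so chain rule applies.

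The central step is then to prove that the linear operator
\[
T_{u}:v\longmapsto A_{h}^{\varepsilon}\bigl(|u|^{p-2}v\bigr)
\]
is compact from $H_{g_{0}}^{\tau}$ into itself. I would factor it as
\[
H_{g_{0}}^{\tau}\xrightarrow{\ i\ }L_{g_{0}}^{p,\tau}\xrightarrow{\ m_{u}\ }L_{g_{0}}^{p',\tau}\xrightarrow{\ A_{h}^{\varepsilon}\ }H_{g_{0}}^{\tau},
\]
where $m_{u}(v)=|u|^{p-2}v$. The embedding $i$ is compact by the Rellich--Kondrachov theorem, since $2<p<2^{*}$ and $M$ is compact. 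The multiplication $m_{u}$ is bounded, because H\"older's inequality with conjugate exponents $p/(p-2)$ and $p/2$ gives
\[
\bigl\||u|^{p-2}v\bigr\|_{L^{p'}}\le\|u\|_{L^{p}}^{p-2}\,\|v\|_{L^{p}},
\]
and $A_{h}^{\varepsilon}$ is bounded by the discussion in Section \ref{sec:Preliminaries}. The composition of a compact operator with bounded operators is compact, so $T_{u}$ is compact on $H_{g_{0}}^{\tau}$. One small point to verify along the way is $\tau$-equivariance: if $v\in H_{g_{0}}^{\tau}$ then $|u|^{p-2}v$ satisfies $\tau^{*}(|u|^{p-2}v)=|u|^{p-2}v$ because $|u|$ is $\tau$-invariant while $v$ changes sign, and $A_{h}^{\varepsilon}$ preserves the $\tau$-symmetry by construction, so $T_{u}$ does take values in $H_{g_{0}}^{\tau}$.

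The conclusion is immediate: $F'_{u}(\varepsilon,h,u)=\Id-(p-1)T_{u}$ is a compact perturbation of the identity on the Banach space $H_{g_{0}}^{\tau}$, hence Fredholm of index zero at every $u$. There is no serious obstacle here; the only care required is in choosing the right Sobolev/H\"older exponents and in keeping track of the $\tau$-invariance of the spaces throughout the factorization.
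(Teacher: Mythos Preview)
Your argument is correct and follows essentially the same line as the paper's: both write the derivative as $\Id$ minus a compact operator, obtaining compactness from the Rellich--Kondrachov embedding $H^{1}\hookrightarrow L^{p}$ together with the H\"older bound on multiplication by $|u|^{p-2}$ and the boundedness of $A_{h}^{\varepsilon}$. The only slip is in the H\"older exponents you name: for the generalized H\"older inequality giving $\bigl\||u|^{p-2}v\bigr\|_{L^{p'}}\le\|u\|_{L^{p}}^{p-2}\|v\|_{L^{p}}$ you need $|u|^{p-2}\in L^{p/(p-2)}$ and $v\in L^{p}$ (so that $(p-2)/p+1/p=1/p'$), not $L^{p/2}$, but the inequality itself is right.
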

\begin{proof}
By the definition of the map $A$, we have\[
F'(\varepsilon_{0},h_{0},u_{0})\left[v\right]=v-(p-1)A_{h_{0}}^{\varepsilon_{0}}\left[|u_{0}|^{p-2}v\right]=v-Kv,\]
where $K(v):=(p-1)i_{\varepsilon_{0},g_{0}+h_{0}}^{*}\left[|u_{0}|^{p-2}v\right]$.
We will verify that $K:H_{\varepsilon_{0},g_{0}}^{\tau}\rightarrow H_{\varepsilon_{0},g_{0}}^{\tau}$
is compact. Thus $K:H_{g_{0}}^{\tau}\rightarrow H_{g_{0}}^{\tau}$
is compact and the claim follows. In fact, in $v_{n}$ is bounded
in $H_{g_{0}}^{\tau}$, $v_{n}$ is also bounded in $H_{\varepsilon_{0},g_{0}+h_{0}}^{\tau}$
because $h_{0}\in\mathscr{B}_{\rho}$. Then, up to subsequence, $v_{n}$
converges to $v$ in $L_{\varepsilon_{0},g_{0}+h_{0}}^{t}$ for $2\leq t<2^{*}$.
So we have \[
\int_{M}\left||u_{0}|^{p-2}(v_{n}-v)\right|^{p'}d\mu_{g}\leq\left(\int_{M}|u_{0}|^{p}d\mu_{g}\right)^{\frac{p-2}{p-1}}\left(\int_{M}|v_{n}-v|^{p}d\mu_{g}\right)^{\frac{1}{p-1}}\rightarrow0.\]
 Therefore $i_{\varepsilon_{0},g_{0}+h_{0}}^{*}\left[|u_{0}|^{p-2}(v_{n}-v)\right]\rightarrow0$
in $H_{\varepsilon_{0},g_{0}+h_{0}}^{\tau}$ and also in $ $$H_{\varepsilon_{0},g_{0}}^{\tau}$. \end{proof}
\begin{lem}
\label{lem:4.2}The map $\pi\circ i:F^{-1}(0)\rightarrow\mathbb{R}\times\mathscr{S}^{k}$
is $\sigma$-proper. Here $i$ is the canonical immersion from $F^{-1}(0)$
into $\mathbb{R}\times\mathscr{S}^{k}\times H_{g_{0}}^{\tau}$ and
$\pi$ is the projection from $\mathbb{R}\times\mathscr{S}^{k}\times H_{g_{0}}^{\tau}$
into $\mathbb{R}\times\mathscr{S}^{k}$.\end{lem}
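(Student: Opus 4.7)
The plan is to exhibit an explicit exhaustion of $F^{-1}(0)$ by closed sets on which $\pi\circ i$ is proper, using that $A_h^\varepsilon$ factors through a compact embedding.

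For each integer $s\ge 2$, I would define
\[
C_s = \Bigl\{(\varepsilon,h,u)\in F^{-1}(0)\ :\ \tfrac1s\le\varepsilon\le 1-\tfrac1s,\ \|h\|_k\le\rho-\tfrac1s,\ \tfrac1s\le\|u\|_{H^{\tau}_{g_0}}\le s\Bigr\}.
\]
Because every element of $F^{-1}(0)$ lies in $(0,1)\times\mathscr{B}_\rho\times(H^\tau_{g_0}\setminus\{0\})$, the family $\{C_s\}$ covers $F^{-1}(0)$. Each $C_s$ is defined by a finite list of continuous closed conditions, so it is closed in $\mathbb{R}\times\mathscr{S}^k\times H^\tau_{g_0}$ (and a fortiori closed in $F^{-1}(0)$).

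It remains to show that $\pi\circ i\!\restriction_{C_s}$ is proper. Let $\{(\varepsilon_n,h_n,u_n)\}\subset C_s$ with $(\varepsilon_n,h_n)\to(\varepsilon_\infty,h_\infty)$ in $\mathbb{R}\times\mathscr{S}^k$. The bounds on $\varepsilon_n$ and $\|h_n\|_k$ pass to the limit, placing $(\varepsilon_\infty,h_\infty)\in[1/s,1-1/s]\times\overline{\mathscr{B}_{\rho-1/s}}\subset(0,1)\times\mathscr{B}_\rho$. Since $\|u_n\|_{H^\tau_{g_0}}\le s$, up to a subsequence $u_n\rightharpoonup u_\infty$ weakly in $H^\tau_{g_0}$, and by the compact Sobolev embedding $H^1_{g_0}(M)\hookrightarrow L^p_{g_0}(M)$ (recall $p<2^*$ and $M$ is compact), $u_n\to u_\infty$ strongly in $L^p_{g_0}$. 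The Nemytskii map $u\mapsto|u|^{p-2}u$ is continuous from $L^p_{g_0}$ to $L^{p'}_{g_0}$, so $|u_n|^{p-2}u_n\to|u_\infty|^{p-2}u_\infty$ in $L^{p'}_{g_0}$. Combining this with the continuous (indeed $C^1$) dependence of the adjoint-type operator $A^\varepsilon_h=i^*_{\varepsilon,g_0+h}$ stated in Lemma \ref{lem:derivate}(3), we deduce
\[
u_n = A_{h_n}^{\varepsilon_n}\!\bigl(|u_n|^{p-2}u_n\bigr)\ \longrightarrow\ A_{h_\infty}^{\varepsilon_\infty}\!\bigl(|u_\infty|^{p-2}u_\infty\bigr)
\]
strongly in $H^\tau_{g_0}$. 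Hence $u_n\to u_\infty$ in norm, $F(\varepsilon_\infty,h_\infty,u_\infty)=0$, and the bounds $1/s\le\|u_\infty\|\le s$ are preserved by strong convergence. Thus $(\varepsilon_\infty,h_\infty,u_\infty)\in C_s$, proving properness.

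The only delicate point is the strong convergence of $u_n$: it relies crucially on the fact that $A^\varepsilon_h$ improves regularity (being the adjoint of a compact embedding) and on the Hölder argument used in Lemma \ref{lem:4.1} to control $|u_n|^{p-2}u_n-|u_\infty|^{p-2}u_\infty$ in $L^{p'}$. Everything else—closedness of $C_s$, coverage of $F^{-1}(0)$, and passage of the bounds to the limit—is bookkeeping. The uniform separation $\|u\|\ge 1/s$ and $\varepsilon\ge 1/s$ is exactly what prevents escape of mass to the excluded boundary of $V\times U$, which is what could otherwise destroy properness.
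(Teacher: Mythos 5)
Your proposal is correct and follows essentially the same route as the paper: identical exhaustion by the closed sets $C_s$ (truncating $\varepsilon$, $\|h\|_k$, and $\|u\|$ away from the boundary of $V\times U$), and the same properness argument via boundedness of $u_n$, compactness of the Sobolev embedding, continuity of the Nemytskii map into $L^{p'}$, and the $C^1$ dependence of $A^\varepsilon_h$ from Lemma~\ref{lem:derivate}. The paper merely makes explicit the two-term splitting $A_{h_n}^{\varepsilon_n}(|u_n|^{p-2}u_n)-A_{h_0}^{\varepsilon_0}(|u_n|^{p-2}u_n)$ and $A_{h_0}^{\varepsilon_0}(|u_n|^{p-2}u_n)-A_{h_0}^{\varepsilon_0}(|u_0|^{p-2}u_0)$, which is implicit in your ``combining'' step.
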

\begin{proof}
Set $I_{g_{0}}(u,R)$ the open ball in $H_{g_{0}}^{\tau}$ centered
in $u$ with radius $R$. We have ${\displaystyle F^{-1}(0)=\cup_{s=1}^{+\infty}C_{s}}$
where\[
C_{s}=\left\{ \left[\frac{1}{s},1-\frac{1}{s}\right]\times\overline{\mathscr{B}_{\rho-\frac{1}{s}}}
\times\left\{ \overline{I_{g_{0}}(0,s)}\smallsetminus I_{g_{0}}\left(0,\frac{1}{s}\right)\right\} \right\} \cap F^{-1}(0).\]
We had to prove that $\pi\circ i:C_{s}\rightarrow\mathbb{R}\times\mathscr{S}^{k}$
is proper, that is if $h_{n}\rightarrow h_{0}$ in $\overline{\mathscr{B}_{\rho-\frac{1}{s}}}$,
$\varepsilon_{n}\rightarrow\varepsilon_{0}$ in $\left[\frac{1}{s},1-\frac{1}{s}\right]$,
$u_{n}\in\left\{ \overline{I_{g_{0}}(0,s)}\smallsetminus I_{g_{0}}\left(0,\frac{1}{s}\right)\right\} $,
and $F(\varepsilon_{n},h_{n},u_{n})=0$, then, up to a subsequence,
the sequence $\left\{ u_{n}\right\} $ converges to $u_{0}\in\left\{ \overline{I_{g_{0}}(0,s)}\smallsetminus I_{g_{0}}\left(0,\frac{1}{s}\right)\right\} $.
Since $\left\{ u_{n}\right\} $ is bounded in $H_{g_{0}}^{1}$, then it
is bounded in $H_{g_{0}+h_{0}}^{1}$, since the two spaces are equivalent
because $h_{0}\in\mathscr{B}_{\rho}$. Thus $u_{n}$ converges, up
to subsequence, to $u_{0}$ in $L_{g_{0}+h_{0}}^{p}$ and in $L_{\varepsilon_{0},g_{0}+h_{0}}^{p}$
for $2\le p<2^{*}$, so $|u_{n}|^{p-2}u_{n}\rightarrow|u_{0}|^{p-2}u_{0}$
in $L_{\varepsilon_{0},g_{0}+h_{0}}^{p'}$ and, by continuity of $A_{h_{0}}^{\varepsilon_{0}}$,\begin{equation}
i_{\varepsilon_{0},g_{0}+h_{0}}^{*}(|u_{n}|^{p-2}u_{n})=A_{h_{0}}^{\varepsilon_{0}}(|u_{n}|^{p-2}u_{n})
\rightarrow A_{h_{0}}^{\varepsilon_{0}}(|u_{0}|^{p-2}u_{0})\
in H_{\varepsilon_{0},g_{0}+h_{0}}^{1}=H_{\varepsilon_{0},g_{0}}^{1}.\label{eq:stella}\end{equation}
By the reguarity of the map $A$ we have, for some $\theta\in(0,1)$
\begin{equation}
\begin{array}{c}
\|A_{h_{n}}^{\varepsilon_{n}}(|u_{n}|^{p-2}u_{n})-
A_{h_{0}}^{\varepsilon_{0}}(|u_{n}|^{p-2}u_{n})\|_{H_{\varepsilon_{0},g_{0}}^{1}}
\leq\|u_{n}\|_{L_{\varepsilon_{0},g_{0}}^{p'}}^{p-1}\left[|\varepsilon_{n}-\varepsilon_{0}|+
\|h_{n}-h_{0}\|_{k}\right]\times\\
\times\|A'(\varepsilon_{0}+\theta(\varepsilon_{n}-\varepsilon_{0}),h_{0}
+\theta(h_{n}-h_{0}))\|_{\mathcal{L}((0,1)\times\mathscr{B}_{\rho},
\mathcal{L}(L_{\varepsilon_{0},g_{0}}^{p'},H_{\varepsilon_{0},g_{0}}^{1}))}.\end{array}\label{eq:doppiastella}
\end{equation}
By (\ref{eq:stella}) and (\ref{eq:doppiastella}) we get that $A_{h_{n}}^{\varepsilon_{n}}(|u_{n}|^{p-2}u_{n})
\rightarrow A_{h_{0}}^{\varepsilon_{0}}(|u_{0}|^{p-2}u_{0})$
in $H_{\varepsilon_{0},g_{0}}^{1}$ then in $H_{g_{0}}^{\tau}$. Since
\[
0=F(\varepsilon_{n},h_{n},u_{n})=u_{n}-A_{h_{n}}^{\varepsilon_{n}}(|u_{n}|^{p-2}u_{n})\]
 we get the claim.\end{proof}
\begin{lem}
\label{lem:4.3}For any $(\varepsilon_{0},h_{0},u_{0})\in(0,1)\times\mathscr{B}_{\rho}\times H_{g_{0}}^{\tau}\smallsetminus\left\{ 0\right\} $
such that $F(\varepsilon_{0},h_{0},u_{0})=0$, it holds that, if $w\in\ker F'_{u}(\varepsilon_{0},h_{0},u_{0})$
and \[
\langle F'_{\varepsilon,h}(\varepsilon_{0},h_{0},u_{0})\left[\varepsilon,h\right],w\rangle_{H_{\varepsilon_{0},g_{0}+h_{0}}^{1}}=0\ \forall\varepsilon\in\mathbb{R},\ h\in\mathscr{S}^{k},\]
then $w=0$.\end{lem}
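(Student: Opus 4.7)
The plan is to unpack the hypothesis via the explicit formulas for $E'$, $G'$, $A'$ given in Lemma \ref{lem:derivate}, separate the two conditions by setting $\varepsilon = 0$ versus $h = 0$, and from the variation in $h$ extract a pointwise tensorial identity. This identity will force $\nabla u_0$ or $\nabla w$ to vanish at each point of $M$; unique continuation applied to $u_0$, together with the $\tau$-antisymmetry, will then give $w \equiv 0$.

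First, from $F(\varepsilon, h, u) = u - A_h^\varepsilon(|u|^{p-2} u)$ one computes
\[
F'_{\varepsilon, h}(\varepsilon_0, h_0, u_0)[\varepsilon, h] = -A'(\varepsilon_0, h_0)[\varepsilon, h](|u_0|^{p-2} u_0).
\]
Using part (3) of Lemma \ref{lem:derivate} and the fact that $F(\varepsilon_0, h_0, u_0) = 0$ gives $A_{h_0}^{\varepsilon_0}(|u_0|^{p-2} u_0) = u_0$, so the inner product on the left side of the hypothesis rewrites as
\[
E'(\varepsilon_0, h_0)[\varepsilon, h](u_0, w) - G'(\varepsilon_0, h_0)[\varepsilon, h](|u_0|^{p-2} u_0, w),
\]
which I can expand explicitly via parts (1) and (2) of the same lemma. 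Setting $h = 0$ and varying $\varepsilon$ (and using the equation $-\varepsilon_0^2\Delta_g u_0 + u_0 = |u_0|^{p-2} u_0$ paired against $w$) gives only the orthogonality $\int_M \langle \nabla u_0, \nabla w\rangle_g \, d\mu_g = 0$, which is an auxiliary byproduct.

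The substantive content comes from setting $\varepsilon = 0$ and varying $h$. Expanding $b(h)^{ij} = \tfrac12 \mathrm{tr}(g^{-1}h)\, g^{ij} - g^{ik} g^{jl} h_{kl}$ with $g = g_0 + h_0$, the condition becomes
\[
\int_M h_{ij}\, T^{ij}(x)\, d\mu_g = 0 \qquad \text{for every } h \in \mathscr{S}^k,
\]
where
\[
T^{ij} = \phi\, g^{ij} - \tfrac{\varepsilon_0^2}{2}\bigl((\nabla u_0)^i (\nabla w)^j + (\nabla u_0)^j (\nabla w)^i\bigr),
\]
\[
\phi = \tfrac12\bigl(u_0 w - |u_0|^{p-2} u_0 w\bigr) + \tfrac{\varepsilon_0^2}{2}\langle \nabla u_0, \nabla w\rangle_g.
\]
Since $u_0$ and $w$ are $\tau$-antisymmetric while $g$ is $\tau$-invariant, the symmetric tensor $T$ is itself $\tau$-invariant; testing against $\tau$-symmetric bump tensors supported in a coordinate chart then yields $T(x) \equiv 0$ on $M$.

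Pointwise we therefore have $\phi\, g = \tfrac{\varepsilon_0^2}{2}(\nabla u_0 \otimes \nabla w + \nabla w \otimes \nabla u_0)$; the left-hand side is a scalar multiple of the metric (rank $n$) and the right-hand side has rank at most $2$. Tracing gives $n\phi = \varepsilon_0^2 \langle \nabla u_0, \nabla w\rangle_g$, and a short case analysis, valid for every $n \geq 2$, forces $\nabla u_0(x) = 0$ or $\nabla w(x) = 0$ at each $x \in M$. By elliptic regularity $u_0$ is smooth; if $\nabla u_0 \equiv 0$ on an open set $\Omega$, then $u_0$ is constant on $\Omega$ and the equation forces this constant to lie in $\{0, \pm 1\}$. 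Viewing $u_0$ (respectively $u_0 \mp 1$) as a solution of a linear elliptic equation with bounded potential, Aronszajn's unique continuation combined with $u_0(\tau x) = -u_0(x)$ contradicts $u_0 \not\equiv 0$. Hence the set where $\nabla u_0 \neq 0$ is open and dense, so $\nabla w = 0$ on a dense set and, by continuity of $\nabla w$, everywhere. Therefore $w$ is constant on the connected manifold $M$, and $w(\tau x) = -w(x)$ forces $w \equiv 0$.

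The main obstacle will be the pointwise rank argument, together with treating dimension $n = 2$ where $\phi$ need not vanish separately from the traceless part, and the nonlinear unique continuation step; both are standard but must be executed carefully.
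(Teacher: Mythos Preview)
Your argument is correct and, while it begins exactly as the paper does (rewriting the hypothesis via Lemma~\ref{lem:derivate} and setting $\varepsilon=0$), the endgame is genuinely different. The paper works in normal coordinates with specific off-diagonal and traceless choices of $h$ to show $\partial_i\tilde u_0(0)\,\partial_j\tilde w(0)+\partial_j\tilde u_0(0)\,\partial_i\tilde w(0)=0$ for all $i,j$; it then feeds this back to obtain the scalar identity $u_0(1-|u_0|^{p-2})w\equiv 0$, splits $M$ according to the level sets $\{u_0=0\}$, $\{u_0=\pm 1\}$, $\{u_0\neq 0,\pm1\}$, applies the maximum principle on $\{u_0=0\}$, and finally invokes unique continuation for $w$ (via \cite{AKS}) to reach a contradiction. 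Your route instead packages the $\varepsilon=0$ variation as $\int_M h_{ij}T^{ij}\,d\mu_g=0$ for a $\tau$-invariant tensor $T$, which immediately yields $T\equiv 0$ by symmetrizing arbitrary bump tensors; from $T=0$ you extract the pointwise dichotomy $\nabla u_0(x)=0$ or $\nabla w(x)=0$, and then apply unique continuation to $u_0$ (rather than $w$) to show $\{\nabla u_0\neq 0\}$ is dense, forcing $\nabla w\equiv 0$ and hence $w\equiv 0$ by connectedness and antisymmetry. Your approach avoids the coordinate computations, the level-set decomposition, and the maximum-principle step; the paper's approach, on the other hand, never needs the algebraic rank/determinant analysis in dimension $n=2$ nor the unique continuation argument for the shifted function $u_0\mp 1$. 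Both are valid; yours is the more invariant and somewhat shorter path.
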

\begin{proof}
\emph{Step 1. }By the definition of $F$ and Lemma \ref{lem:derivate}
we get\begin{equation}
F'_{\varepsilon,h}(\varepsilon_{0},h_{0},u_{0})\left[\varepsilon,h\right]=-A'(\varepsilon_{0},h_{0})\left[\varepsilon,h\right]\left(|u_{0}|^{p-2}u_{0}\right)\label{eq:4.3.1}\end{equation}
and so \begin{eqnarray*}
 &  & \langle F'_{\varepsilon,h}(\varepsilon_{0},h_{0},u_{0})\left[\varepsilon,h\right],w\rangle_{H_{g_{0}+h_{0},\varepsilon_{0}}^{1}}\\
 & = & -E_{h_{0}}^{\varepsilon_{0}}\left(A'(\varepsilon_{0},h_{0})\left[\varepsilon,h\right]\left(|u_{0}|^{p-2}u_{0}\right),w\right)=\\
 & = & -G'(\varepsilon_{0},h_{0})\left[\varepsilon,h\right]\left(|u_{0}|^{p-2}u_{0},w\right)+E'(\varepsilon_{0},h_{0})\left[\varepsilon,h\right]\left(u_{0},w\right)=\\
 & = & -\frac{1}{2\varepsilon_{0}^{n}}\int_{M}tr(g^{-1}h)|u_{0}|^{p-2}u_{0}wd\mu_{g}+\frac{n\varepsilon}{\varepsilon_{0}^{n+1}}\int_{M}|u_{0}|^{p-2}u_{0}wd\mu_{g}\\
 &  & +\frac{1}{2\varepsilon_{0}^{n}}\int_{M}tr(g^{-1}h)u_{0}wd\mu_{g}+\frac{1}{\varepsilon_{0}^{n-2}}\int_{M}\langle\nabla_{g}u_{0},\nabla_{g}w\rangle_{b(h)}d\mu_{g}\\
 &  & -\frac{n\varepsilon}{\varepsilon_{0}^{n+1}}\int_{M}u_{0}wd\mu_{g}-\frac{(n-2)\varepsilon}{\varepsilon_{0}^{n-1}}\int_{M}\langle\nabla_{g}u_{0},\nabla_{g}w\rangle d\mu_{g}.\end{eqnarray*}
Here we use that $A_{h_{0}}^{\varepsilon_{0}}(|u_{0}|^{p-2}u_{0})=u_{0}$.
Moreover $g=g_{0}+h_{0}$ with $h_{0}\in\mathscr{B}_{\rho}$ and $b(h):=\frac{1}{2}tr(g^{-1}h)g-g^{-1}hg^{-1}$. 

If we choose $\varepsilon=0$, by the previous equation we get\begin{eqnarray}
\langle F'_{\varepsilon,h}(\varepsilon_{0},h_{0},u_{0})\left[0,h\right],w\rangle_{H_{\varepsilon_{0},g_{0}+h_{0}}^{1}} & = & \frac{1}{2\varepsilon_{0}^{n}}\int_{M}tr(g^{-1}h)\left[u_{0}-|u_{0}|^{p-2}u_{0}\right]wd\mu_{g}+\nonumber\\
 &  & +\frac{1}{\varepsilon_{0}^{n-2}}\int_{M}\langle\nabla_{g}u_{0},\nabla_{g}w\rangle_{b(h)}d\mu_{g} \label{eq:4.3.3}\end{eqnarray}

\emph{Step 2. }We prove that, if $\langle F'_{\varepsilon,h}(\varepsilon_{0},h_{0},u_{0})\left[0,h\right],w
\rangle_{H_{\varepsilon_{0},g_{0}+h_{0}}^{1}}=0$
$\forall h\in\mathscr{S}^{k}$, then $ $it holds\[
\langle\nabla_{g}u_{0}(\xi),\nabla_{g}w(\xi)\rangle_{b(h)}=0\text{ for all }\xi\in M.\]
Given $\xi_{0}\in M$, we consider the normal coordinates at $\xi_{0}$
and we set\[
\tilde{u}_{0}(x)=u_{0}(\exp_{\xi_{0}}x),\ \tilde{w}(x)=w(\exp_{\xi_{0}}x),\text{ for }x\in B(0,R)\subset\mathbb{R}^{n}.\]
We will prove that ${\displaystyle \frac{\partial\tilde{u}_{0}(0)}{\partial x_{1}}\frac{\partial\tilde{w}(0)}{\partial x_{2}}+\frac{\partial\tilde{u}_{0}(0)}{\partial x_{2}}\frac{\partial\tilde{w}(0)}{\partial x_{1}}=0}$.
Analogously we can get ${\displaystyle \frac{\partial\tilde{u}_{0}(0)}{\partial x_{i}}\frac{\partial\tilde{w}(0)}{\partial x_{j}}+\frac{\partial\tilde{u}_{0}(0)}{\partial x_{j}}\frac{\partial\tilde{w}(0)}{\partial x_{i}}=0}$.

If $\xi_{0}\neq\tau\xi_{0}$, we assume that $B_{g}(\xi_{0},R)\cap B_{g}(\tau\xi_{0},R)=\emptyset$.
Then choosing $h\in\mathscr{S}^{k}$ vanishing outside $B_{g}(\xi_{0},R)\cup B_{g}(\tau\xi_{0},R)$,
by the fact that $h(\tau x)=h(x)$ on $M$, by (\ref{eq:4.4.3}) and
by our assumption we have \begin{equation}
\frac{1}{\varepsilon_{0}^{n}}\int_{B(\xi_{0},R)}tr(g^{-1}h)\left[u_{0}-|u_{0}|^{p-2}u_{0}\right]wd\mu_{g}+\frac{1}{\varepsilon_{0}^{n-2}}\int_{M}\langle\nabla_{g}u_{0},\nabla_{g}w\rangle_{b(h)}d\mu_{g}=0.\label{eq:4.3.4a}\end{equation}
Using the normal coordinates at $\xi_{0}$ we choose $h$ such that
the matrix $\{h_{ij}(x)\}_{i,j=1,\cdots,n}$ has the form $h_{12}(x)=h_{21}(x)\in C_{0}^{\infty}(B(0,R))$
and $h_{ij}\equiv0$ otherwise. By (\ref{eq:4.3.3}) we have\begin{eqnarray}
0 & = & \int_{B(0,R)}|g(x)|^{1/2}h_{12}(x)\left\{ -\varepsilon_{0}^{2}b_{12}(x)\left(\frac{\partial\tilde{u}_{0}}{\partial x_{1}}\frac{\partial\tilde{w}}{\partial x_{2}}+\frac{\partial\tilde{u}_{0}}{\partial x_{2}}\frac{\partial\tilde{w}}{\partial x_{1}}\right)+\sigma(x)\right\} dx\label{eq:4.3.4}\end{eqnarray}
where 
\begin{multline}
\sigma(x)=-\varepsilon_{0}^{2}\sum_{\begin{array}{c}
r,s=1,\dots,n\\
(r,s)\neq(1,2)\\
(r,s)\neq(2,1)\end{array}}
b_{rs}\left(\frac{\partial\tilde{u}_{0}}{\partial x_{r}}\frac{\partial\tilde{w}}{\partial x_{s}}\right)\\
+2g^{12}\left\{ \frac{\varepsilon_{0}^{2}}{2}\sum_{i,j=1}^{n}g^{ij}
\left(\frac{\partial\tilde{u}_{0}}{\partial x_{i}}\frac{\partial\tilde{w}}{\partial x_{j}}\right)
+\left[\tilde{u}_{0}-|\tilde{u}_{0}|^{p-2}\tilde{u}_{0}\right]\tilde{w}\right\} .
\end{multline}
Here $b_{rs}(x)=\left(g^{-1}(x)\Gamma g^{-1}(x)\right)_{rs}$, where
$\Gamma_{12}=\Gamma_{21}=0$, $\Gamma_{ij}=\Gamma_{j,i}=0$ for $i,j=1,\dots,n$,
$(i,j)\neq(1,2)$. Then $b_{12}(0)=b_{21}(0)=1$, $b_{rs}(0)=0$ otherwise,
so $\sigma(0)=0$. By (\ref{eq:4.3.4}), at this point we have \[
-\varepsilon_{0}^{2}b_{12}(x)\left(\frac{\partial\tilde{u}_{0}}{\partial x_{1}}(x)\frac{\partial\tilde{w}}{\partial x_{2}}(x)+\frac{\partial\tilde{u}_{0}}{\partial x_{2}}(x)\frac{\partial\tilde{w}}{\partial x_{1}}(x)\right)+\sigma(x)\text{ for }x\in B(0,R).\]
Then \[
\frac{\partial\tilde{u}_{0}}{\partial x_{1}}(0)\frac{\partial\tilde{w}}{\partial x_{2}}(0)+\frac{\partial\tilde{u}_{0}}{\partial x_{2}}(0)\frac{\partial\tilde{w}}{\partial x_{1}}(0)=0.\]
If $\xi_{0}=\tau\xi_{0}$, we consider the equality (\ref{eq:4.3.3})
when $h\in\mathscr{S}^{k}$ vanishes outside $B_{g}(\xi_{0},R)$,
recalling that $h(\tau(\xi))=h(\xi)$ for $\xi\in M$. Arguing as
in the previous case, by (\ref{eq:4.3.4})we get that \[
\gamma(x)=\varepsilon_{0}^{2}b_{12}(x)\left(\frac{\partial\tilde{u}_{0}}{\partial x_{1}}\frac{\partial\tilde{w}}{\partial x_{2}}+\frac{\partial\tilde{u}_{0}}{\partial x_{2}}\frac{\partial\tilde{w}}{\partial x_{1}}\right)+\sigma(x)\]
 is antisymmetric with respect to $\bar{\tau}=\exp_{\xi_{0}}^{-1}\tau\exp_{\xi_{0}}$.
Also, we have that $\gamma$ is symmetric with respect to $\bar{\tau}$,
so $\gamma(0)=0$, and, since $b_{12}(0)=1$ and $\sigma(0)=0$, we
have again ${\displaystyle \frac{\partial\tilde{u}_{0}}{\partial x_{1}}(0)\frac{\partial\tilde{w}}{\partial x_{2}}(0)+\frac{\partial\tilde{u}_{0}}{\partial x_{2}}(0)\frac{\partial\tilde{w}}{\partial x_{1}}(0)=0}$. 

Now we prove that ${\displaystyle \frac{\partial\tilde{u}_{0}(0)}{\partial x_{i}}\frac{\partial\tilde{w}(0)}{\partial x_{i}}=0}$
for all $i=1,\dots,n$. 

If $\xi_{0}\neq\tau\xi_{0}$, arguing as in the previous case we get
(\ref{eq:4.3.4a}). This time we choose the matrix $\{h_{ij}(x)\}_{i,j}$
such that $h_{11}\in C_{0}^{\infty}(B(0,R))$, $h_{22}=-h_{11}$ and
$h_{ij}\equiv0$ otherwise. Because $tr(g^{-1}h)=(g^{11}-g^{22})h_{11}$,
by (\ref{eq:4.3.4a}), we get \begin{eqnarray}
0 & = & \int_{B(0,R)}|g(x)|^{1/2}h_{11}(x)\Big\{\left[g^{11}(x)-g^{22}(x)\right]\times\nonumber\\
&&\times \left(\varepsilon_{0}^{2}\sum_{ij}g^{ij}(x)\frac{\partial\tilde{u}_{0}}{\partial x_{i}}\frac{\partial\tilde{w}}{\partial x_{j}}+\tilde{u}_{0}\tilde{w}-|\tilde{u}_{0}|^{p-2}\tilde{u}_{0}\tilde{w}\right)\nonumber \\
 &  & -\varepsilon_{0}^{2}\left[g^{11}(x)g^{12}(x)-g^{12}(x)g^{21}(x)\right]\left(\frac{\partial\tilde{u}_{0}}{\partial x_{1}}\frac{\partial\tilde{w}}{\partial x_{2}}+\frac{\partial\tilde{u}_{0}}{\partial x_{2}}\frac{\partial\tilde{w}}{\partial x_{1}}\right)\label{eq:4.3.5}\\
 &  & -\varepsilon_{0}^{2}\sum_{k=1}^{n}\left[\left(g^{1k}(x)\right)^{2}-\left(g^{2k}(x)\right)^{2}\right]\frac{\partial\tilde{u}_{0}}{\partial x_{k}}\frac{\partial\tilde{w}}{\partial x_{k}}\Big\}dx.\nonumber \end{eqnarray}
Then, recalling that ${\displaystyle \frac{\partial\tilde{u}_{0}}{\partial x_{1}}(0)\frac{\partial\tilde{w}}{\partial x_{2}}(0)+\frac{\partial\tilde{u}_{0}}{\partial x_{2}}(0)\frac{\partial\tilde{w}}{\partial x_{1}}(0)=0}$
and that $g^{ij}(0)=\delta_{ij}$we have

\[
\left[\left(g^{11}(0)\right)^{2}-\left(g^{21}(0)\right)^{2}\right]\frac{\partial\tilde{u}_{0}(0)}{\partial x_{1}}\frac{\partial\tilde{w}(0)}{\partial x_{1}}+\left[\left(g^{12}(0)\right)^{2}-\left(g^{22}(0)\right)^{2}\right]\frac{\partial\tilde{u}_{0}(0)}{\partial x_{2}}\frac{\partial\tilde{w}(0)}{\partial x_{2}}=0.\]
So ${\displaystyle \frac{\partial\tilde{u}_{0}(0)}{\partial x_{1}}\frac{\partial\tilde{w}(0)}{\partial x_{1}}=\frac{\partial\tilde{u}_{0}(0)}{\partial x_{2}}\frac{\partial\tilde{w}(0)}{\partial x_{2}}}$
and, analogously, ${\displaystyle \frac{\partial\tilde{u}_{0}(0)}{\partial x_{1}}\frac{\partial\tilde{w}(0)}{\partial x_{1}}=\frac{\partial\tilde{u}_{0}(0)}{\partial x_{i}}\frac{\partial\tilde{w}(0)}{\partial x_{i}}}$
for all $i$. At this point, since ${\displaystyle \frac{\partial\tilde{u}_{0}(0)}{\partial x_{i}}\frac{\partial\tilde{w}(0)}{\partial x_{j}}+\frac{\partial\tilde{u}_{0}(0)}{\partial x_{j}}\frac{\partial\tilde{w}(0)}{\partial x_{i}}=0}$
for all $i\neq j$ we get \[
{\displaystyle \frac{\partial\tilde{u}_{0}(0)}{\partial x_{i}}\frac{\partial\tilde{w}(0)}{\partial x_{i}}=0}\text{ for all }i=1,\cdots,n.\]

If $\xi_{0}=\tau\xi_{0}$, since $h$ is symmetric with respect to
$\tau$, by (\ref{eq:4.3.5}) we get that \begin{eqnarray*}
\gamma(x) & = & \left[g^{11}(x)-g^{22}(x)\right]\left(\varepsilon_{0}^{2}\sum_{ij}g^{ij}(x)\frac{\partial\tilde{u}_{0}}{\partial x_{i}}\frac{\partial\tilde{w}}{\partial x_{j}}+\tilde{u}_{0}\tilde{w}-|\tilde{u}_{0}|^{p-2}\tilde{u}_{0}\tilde{w}\right)\\
 &  & -\varepsilon_{0}^{2}\left[g^{11}(x)g^{12}(x)-g^{12}(x)g^{21}(x)\right]\left(\frac{\partial\tilde{u}_{0}}{\partial x_{1}}\frac{\partial\tilde{w}}{\partial x_{2}}+\frac{\partial\tilde{u}_{0}}{\partial x_{2}}\frac{\partial\tilde{w}}{\partial x_{1}}\right)\\
 &  & -\varepsilon_{0}^{2}\sum_{k=1}^{n}\left[\left(g^{1k}(x)\right)^{2}-\left(g^{2k}(x)\right)^{2}\right]\frac{\partial\tilde{u}_{0}}{\partial x_{k}}\frac{\partial\tilde{w}}{\partial x_{k}}\end{eqnarray*}
is antisymmetric with respect to $\bar{\tau}=\exp_{\xi_{0}}^{-1}\tau\exp_{\xi_{0}}$.
Concluding \[
0=\gamma(0)=\left(g^{11}(0)\right)^{2}\frac{\partial\tilde{u}_{0}(0)}{\partial x_{1}}\frac{\partial\tilde{w}(0)}{\partial x_{1}}-\left(g^{22}(0)\right)^{2}\frac{\partial\tilde{u}_{0}(0)}{\partial x_{2}}\frac{\partial\tilde{w}(0)}{\partial x_{2}}.\]
At this point, arguing as above we have that \[
{\displaystyle \frac{\partial\tilde{u}_{0}(0)}{\partial x_{i}}\frac{\partial\tilde{w}(0)}{\partial x_{i}}=0}\text{ for all }i=1,\cdots,n.\]
 and the Step 2 is proved.

\emph{Step 3}. Conclusion of the proof.

By Step 2, we have that, for any $h\in\mathscr{S}^{k}$\begin{equation}
0=\langle F'_{\varepsilon,h}(\varepsilon_{0},h_{0},u_{0})\left[0,h\right],w\rangle_{H_{\varepsilon_{0},g_{0}+h_{0}}^{1}}=\frac{1}{2\varepsilon_{0}^{n}}\int_{M}tr(g^{-1}h)u_{0}\left(1-|u_{0}|^{p-2}\right)wd\mu_{g}.\label{eq:4.3.7}\end{equation}
 Here $g=g_{0}+h_{0}$. Moreover it holds\begin{eqnarray}
-\varepsilon_{0}\Delta_{g}w+w=(p-1)|u_{0}|^{p-2}w &  & w\in H_{g}^{\tau}\label{eq:4.3.9}\end{eqnarray}
We choose $h(\xi)=\alpha(\xi)g(\xi)$ for any $\alpha\in C^{\infty}(M)$
with $\alpha(\tau\xi)=\alpha(\xi)$, so, by (\ref{eq:4.3.7}), the
function $u_{0}\left(1-|u_{0}|^{p-2}\right)w$ is antisymmetric with
respect to the involution $\tau$. Furthermore $u_{0}\left(1-|u_{0}|^{p-2}\right)w$
is also symmetric, so \begin{equation}
u_{0}\left(1-|u_{0}|^{p-2}\right)w\equiv0.\label{eq:4.3.10}\end{equation}
By contradiction we assume that $w$ does not vanish indentically
in $M$. Since $u_{0}\in H_{g}^{\tau}\smallsetminus\left\{ 0\right\} $
we can split \[
M=M^{0}\cup M^{1}\cup\tau M^{1}\cup M^{+}\cup\tau M^{+}\]
where $M^{0}=\left\{ x\in M\ :\ u_{0}(x)=0\right\} $, $M^{1}=\left\{ x\in M\ :\ u_{0}(x)=1\right\} $,
and $M^{+}=\left\{ x\in M\ :\ u_{0}(x)>0,\ u_{0}(x)\neq1\right\} $.
By (\ref{eq:4.3.10}) we have that $w\equiv0$
on the open subset $M^{+}\cup\tau M^{+}$. Also, we notice that $M_0$ and $M_1$ are disjoint sets because 
$u_0$ is a continuous funcion. By this, and by (\ref{eq:4.3.9}), we have that 
$-\varepsilon_{0}\Delta_{g}w+w=0$ on $M_0$ and $w=0$  on $\partial M_0$. By the maximum 
principle, we conclude that $w=0$ on $M_0$.
So we have that, by (\ref{eq:4.3.9}), 
$-\varepsilon_{0}\Delta_{g}w+w=(p-1)w$
on the whole $M$. On the other hand, by \cite{AKS}, we have that $\mu_{g}\left(\left\{ x\in M\ :\ w(x)=0\right\} \right)=0$.
A contradiction arises and that concludes the proof 
\end{proof}
With the same argument we can prove the following corollary.
\begin{cor}
\label{cor:teo10}Given $ $$\varepsilon_{0}$, for any $(h_{0},u_{0})\in\mathscr{B}_{\rho}\times H_{g_{0}}^{\tau}\smallsetminus\left\{ 0\right\} $
such that $F(\varepsilon_{0},h_{0},u_{0})=0$, if $w\in\ker F'_{u}(\varepsilon_{0},h_{0},u_{0})$
and \[
\langle F'_{h}(\varepsilon_{0},h_{0},u_{0})\left[h\right],w\rangle_{H_{\varepsilon_{0},g_{0}+h_{0}}^{1}}=0\ \forall h\in\mathscr{S}^{k},\]
then $w=0$.\end{cor}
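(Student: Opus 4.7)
The plan is to reuse the proof of Lemma \ref{lem:4.3} almost verbatim. The key observation is that Step 1 of that proof derives equation (\ref{eq:4.3.3}) by setting $\varepsilon = 0$ in the full $(\varepsilon, h)$-pairing, and Steps 2 and 3 never invoke the $\varepsilon$-variation again. But what Step 1 with $\varepsilon = 0$ computes is precisely the pairing $\langle F'_h(\varepsilon_0, h_0, u_0)[h], w\rangle$ that appears in the hypothesis of the corollary, so the $\varepsilon$-direction plays no essential role in the argument.

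Concretely, I would first apply parts (1)--(3) of Lemma \ref{lem:derivate} with the $\varepsilon$-input taken to be zero, together with the fixed-point relation $A_{h_0}^{\varepsilon_0}(|u_0|^{p-2}u_0) = u_0$, to obtain
$$\langle F'_h(\varepsilon_0, h_0, u_0)[h], w\rangle_{H_{\varepsilon_0, g_0+h_0}^1} = \frac{1}{2\varepsilon_0^n}\int_M tr(g^{-1}h)\bigl[u_0 - |u_0|^{p-2}u_0\bigr] w\, d\mu_g + \frac{1}{\varepsilon_0^{n-2}}\int_M \langle \nabla_g u_0, \nabla_g w\rangle_{b(h)}\, d\mu_g,$$
which is exactly the right-hand side of (\ref{eq:4.3.3}). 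The hypothesis of the corollary thus coincides with the assumption driving Step 2 of Lemma \ref{lem:4.3}, so Step 2 transplants without modification: for each $\xi_0 \in M$ one passes to normal coordinates, treats the cases $\xi_0 \neq \tau\xi_0$ and $\xi_0 = \tau\xi_0$ separately using the symmetry $h(\tau x) = h(x)$, and picks localized test tensors $h \in \mathscr{S}^k$ so as to deduce $\partial_i \tilde u_0(0)\, \partial_j \tilde w(0) + \partial_j \tilde u_0(0)\, \partial_i \tilde w(0) = 0$ for all $i,j$, and in particular $\partial_i \tilde u_0(0)\, \partial_i \tilde w(0) = 0$ for every $i$.

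Finally, Step 3 transplants as well: choosing $h = \alpha(\xi) g(\xi)$ with $\alpha \in C^\infty(M)$ and $\alpha \circ \tau = \alpha$ reduces the identity to $u_0(1 - |u_0|^{p-2})w \equiv 0$; splitting $M = M^0 \cup M^1 \cup \tau M^1 \cup M^+ \cup \tau M^+$ yields $w \equiv 0$ on $M^+ \cup \tau M^+$ directly; applying the maximum principle to the linearized equation $-\varepsilon_0^2 \Delta_g w + w = (p-1)|u_0|^{p-2} w$ with vanishing Dirichlet data on $\partial M^0$ extends the vanishing to $M^0$; and the resulting equation $-\varepsilon_0^2 \Delta_g w + w = (p-1)w$ on all of $M$, combined with the Aronszajn unique continuation result \cite{AKS}, forces $w \equiv 0$. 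The one point that genuinely requires checking, and where I expect any subtlety to hide, is that no step of Lemma \ref{lem:4.3} beyond Step 1 secretly exploited the $\varepsilon$-direction; a quick audit confirms it does not, so the argument carries over without change.
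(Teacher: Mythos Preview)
Your proposal is correct and matches the paper's approach exactly: the paper simply states ``With the same argument we can prove the following corollary,'' and your audit confirming that Steps 2 and 3 of Lemma \ref{lem:4.3} never invoke the $\varepsilon$-direction is precisely the justification for that remark.
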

\begin{lem}
\label{lem:4.4}Given $g_{0}\in\mathscr{M}^{k}$ and $\varepsilon_{0}$,
if there exists a number $\mu>m_{\varepsilon_{0},g_{0}}$ not a critical
level of the functional $J_{\varepsilon_{0},g_{0}}$, then, for $\rho$
small enough, the map $\pi\circ i:G^{-1}(0)\rightarrow\mathscr{S}^{k}$
is proper. Here $G$ is defined in (\ref{eq:G}), $i$ is the canonical
embedding from $G^{-1}(0)$ into $\mathscr{S}^{k}\times H_{g_{0}}^{\tau}$
and $\pi$ is the projection from $\mathscr{S}^{k}\times H_{g_{0}}^{\tau}$
into $\mathscr{S}^{k}$.\end{lem}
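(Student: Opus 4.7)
The plan is to verify properness directly: given any compact $K\subset\mathscr{S}^{k}$ contained in $\mathscr{B}_\rho$ and any sequence $(h_n,u_n)\in G^{-1}(0)$ with $h_n\to h_0\in K$, extract a subsequence so that $u_n\to u_0$ strongly in $H^{\tau}_{g_0}$ and $(h_0,u_0)\in G^{-1}(0)$. Since $G$ lives on $\mathscr{B}_\rho\times\mathscr{D}$ with $\mathscr{D}=\{u\in H_{g_0}^{\tau}:\mu_0<J_{\varepsilon_0,g_0}(u)<\mu\}$, the delicate point is to guarantee that the limit $u_0$ still satisfies the strict bounds $\mu_0<J_{\varepsilon_0,g_0}(u_0)<\mu$.

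First I would establish boundedness and strong compactness of $\{u_n\}$. Testing the identity $u_n=A^{\varepsilon_0}_{h_n}(|u_n|^{p-2}u_n)$ against $u_n$ yields the Nehari relation $\|u_n\|^2_{H^{1}_{\varepsilon_0,g_0+h_n}}=\|u_n\|^p_{L^{p}_{\varepsilon_0,g_0+h_n}}$, whence $J_{\varepsilon_0,g_0+h_n}(u_n)=(1/2-1/p)\|u_n\|^2_{H^{1}_{\varepsilon_0,g_0+h_n}}$. Because $K\subset\mathscr{B}_\rho$ is compact, the norms associated with $g_0$ and $g_0+h_n$ are uniformly equivalent, so combining the previous identity with $J_{\varepsilon_0,g_0}(u_n)<\mu$ gives a uniform bound on $\|u_n\|_{H^{1}_{g_0}}$. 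Then exactly as in the proof of Lemma \ref{lem:4.2}, the compact Sobolev embedding together with the $C^1$ dependence of $A^{\varepsilon_0}_{h}$ on $h$ from Lemma \ref{lem:derivate} produces a strongly convergent subsequence $u_n\to u_0$ in $H^{\tau}_{g_0}$ with $G(h_0,u_0)=0$.

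Next I would check the lower bound $J_{\varepsilon_0,g_0}(u_0)>\mu_0$. Applying the Sobolev inequality to the Nehari identity shows that nontrivial solutions of $-\varepsilon_0^{2}\Delta_{g_0+h}u+u=|u|^{p-2}u$ enjoy a positive lower bound on $\|u\|_{H^{1}_{g_0}}$ uniform in $h\in\overline{\mathscr{B}_\rho}$, so $u_0\neq 0$. For $\rho$ small enough I would claim that no nontrivial solution of the perturbed PDE with $h\in\overline{\mathscr{B}_\rho}$ can satisfy $J_{\varepsilon_0,g_0}(u)\le\mu_0$: otherwise a sequence of such counterexamples along $\rho_n\to 0$ would converge strongly (by the same argument) to a nontrivial solution of the unperturbed PDE with energy $\le\mu_0<m^{\tau}_{\varepsilon_0,g_0}$, contradicting the definition of $m^{\tau}_{\varepsilon_0,g_0}$ as the infimum of $J_{\varepsilon_0,g_0}$ on $\mathcal{N}^{\tau}_{\varepsilon_0,g_0}$.

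The main obstacle is the strict upper bound $J_{\varepsilon_0,g_0}(u_0)<\mu$, since continuity of $J_{\varepsilon_0,g_0}$ on the strongly convergent sequence only gives $J_{\varepsilon_0,g_0}(u_0)\le\mu$, and $u_0$ is a critical point of $J_{\varepsilon_0,g_0+h_0}$ rather than of $J_{\varepsilon_0,g_0}$, so the non-criticality hypothesis cannot be invoked on $u_0$ directly. The idea is again a diagonal perturbation argument: if for every $\rho>0$ there existed $h_\rho\in\overline{\mathscr{B}_\rho}$ and a nontrivial solution $v_\rho$ of the PDE for $g_0+h_\rho$ with $J_{\varepsilon_0,g_0}(v_\rho)=\mu$, then letting $\rho\to 0$ and extracting a strongly convergent subsequence would yield a nontrivial limit $v_\infty$ solving $-\varepsilon_0^{2}\Delta_{g_0}v_\infty+v_\infty=|v_\infty|^{p-2}v_\infty$ with $J_{\varepsilon_0,g_0}(v_\infty)=\mu$, i.e. a critical point of $J_{\varepsilon_0,g_0}$ at the forbidden level $\mu$, contradicting the hypothesis. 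Hence, shrinking $\rho$ if necessary, no solution of any perturbed PDE reaches the level $\mu$ exactly; combined with $J_{\varepsilon_0,g_0}(u_0)\le\mu$ this forces $J_{\varepsilon_0,g_0}(u_0)<\mu$, completing the verification that $(h_0,u_0)\in G^{-1}(0)$ and hence the properness.
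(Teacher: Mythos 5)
Your first two steps (boundedness via the Nehari relation and uniform norm equivalence, then strong compactness via the compact Sobolev embedding and the $C^1$ dependence of $A^{\varepsilon_0}_{h}$ on $h$) follow the paper's proof of Lemma \ref{lem:4.4} essentially line by line, including the use of the analogue of \eqref{eq:4.4.3}--\eqref{eq:4.4.4}. Where you genuinely depart from the paper is the final step. The paper simply states that since $\mu_0$ and $\mu$ are not critical values of $J_{\varepsilon_0,g_0}$, the limit $u_0$ satisfies $\mu_0<J_{\varepsilon_0,g_0}(u_0)<\mu$ and hence lies in $\mathscr{D}$. As you correctly point out, this is too quick as written: $u_0$ is a critical point of $J_{\varepsilon_0,g_0+h_0}$ rather than of $J_{\varepsilon_0,g_0}$, so the non-criticality hypothesis on $J_{\varepsilon_0,g_0}$ does not apply to $u_0$ directly, and strong convergence only yields the non-strict inequalities. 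Your diagonal perturbation argument---if for every $\rho$ there were a nontrivial solution of the perturbed equation at level exactly $\mu$ (or $\le\mu_0$), then sending $\rho\to0$ and running the same compactness machinery would produce a nontrivial critical point of $J_{\varepsilon_0,g_0}$ at a forbidden level---is a clean way to close this gap, and it also explains concretely why the conclusion requires ``$\rho$ small enough,'' a restriction the paper states in the lemma but does not visibly use at this point of its proof. In short: same skeleton, but your version makes the last step rigorous rather than merely asserted. One small remark: in carrying out the diagonal argument you should be a little careful that the a priori bound from the first step still holds with the non-strict inequality $J_{\varepsilon_0,g_0}(v_\rho)\le\mu$ in place of the strict one; this is indeed the case because the estimate of type \eqref{eq:4.4.1}--\eqref{eq:4.4.2} only uses the upper bound $\le\mu$, but it is worth saying explicitly.
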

\begin{proof}
Let $\left\{ u_{n}\right\} \subset\mathscr{D}$, where 

\[
\mathscr{D}=\left\{ u\in H_{g_{0}}^{\tau}\ :\ \mu_{0}<J_{\varepsilon_{0},g_{0}}(u)<\mu\right\} ,\]
 and $\mu_{0}$ is an arbitrary number in $(0,m_{\varepsilon_{0},g_{0}}^{\tau})$.
It is sufficient to prove that if $u_{n}$ satifisfies $-\varepsilon_{0}^{2}\Delta_{g_{0}+h_{n}}u_{n}+u_{n}=|u_{n}|^{p-2}u_{n}$
with $h_{n}\rightarrow h_{0}\in\mathscr{B}_{\rho},$ then the sequence
$\left\{ u_{n}\right\} $ has a subsequence convergent in $\mathscr{D}$.
First we show that $\left\{ u_{n}\right\} $ is bounded in $H_{g_{0}}^{\tau}$.
Since the sets $H_{g_{0}+h}^{1}(M)$ and $H_{g_{0}}^{1}(M)$ are the
same in $h\in\mathscr{B}_{\rho}$ and the norms $\|\cdot\|_{H_{g_{0}+h}^{1}}$
and $\|\cdot\|_{H_{g_{0}+h}^{1}}$ are equivalent with equivalence
constants $c_{1}$ and $c_{2}$ not depending on $h$, we have \[
c_{1}\|u\|_{H_{\varepsilon_{0},g_{0}}^{1}}\leq\|u\|_{H_{\varepsilon_{0},g_{0}+h}^{1}}\leq c_{2}\|u\|_{H_{\varepsilon_{0},g_{0}}^{1}}.\]
By this, and because $u_{n}\in\mathcal{N}_{\varepsilon_{0},g_{0}+h_{n}}^{\tau}$
we have \begin{eqnarray}
\left(\frac{1}{2}-\frac{1}{p}\right)c_{1}^{2}\|u_{n}\|_{H_{\varepsilon_{0},g_{0}}^{1}}^{2} & \le & \left(\frac{1}{2}-\frac{1}{p}\right)\|u_{n}\|_{H_{\varepsilon_{0},g_{0}+h_{n}}^{1}}^{2}=J_{\varepsilon_{0},g_{0}+h_{n}}(u_{n})=\nonumber \\
 & = & \frac{1}{2}\|u_{n}\|_{H_{\varepsilon_{0},g_{0}+h_{n}}^{1}}^{2}-\frac{1}{p}\|u_{n}\|_{L_{\varepsilon_{0},g_{0}+h_{n}}^{p}}^{p}\nonumber \\
 & \leq & J_{\varepsilon_{0},g_{0}}(u_{n})+c\|h_{n}\|_{k}\left[\|u_{n}\|_{H_{\varepsilon_{0},g_{0}}^{1}}^{2}+\|u_{n}\|_{L_{\varepsilon_{0},g_{0}}^{p}}^{p}\right]\le\nonumber \\
 & \leq & \mu+c\rho\left[\|u_{n}\|_{H_{\varepsilon_{0},g_{0}}^{1}}^{2}+\|u_{n}\|_{L_{\varepsilon_{0},g_{0}}^{p}}^{p}\right]\label{eq:4.4.1}\end{eqnarray}
Moreover, since $\mu_{0}<J_{\varepsilon_{0},g_{0}}(u_{n})<\mu$ we
get\begin{equation}
\mu_{0}<\left(\frac{1}{2}-\frac{1}{p}\right)\|u_{n}\|_{L_{\varepsilon_{0},g_{0}}^{p}}^{p}<\mu\label{eq:4.4.2}\end{equation}
by (\ref{eq:4.4.1}) and (\ref{eq:4.4.2}), if $\|u_{n}\|_{H_{\varepsilon_{0},g_{0}}^{1}}\rightarrow+\infty$
we get \[
\left(\frac{1}{2}-\frac{1}{p}\right)\|u_{n}\|_{H_{\varepsilon_{0},g_{0}}^{1}}^{2}\leq\mu+c\rho\|u_{n}\|_{H_{\varepsilon_{0},g_{0}}^{1}}^{2}+c\rho,\]
 then, choosing $\rho$ small enough, we get the contradiction.

Since the sequence $\left\{ u_{n}\right\} $ is bounded in $H_{g_{0}}^{\tau}$
and $H_{g_{0}+h_{0}}^{\tau}$, up to a subsequence $u_{n}\rightarrow u$
in $L_{g_{0}+h_{0}}^{t,\tau}(M)$ and $L_{g_{0}}^{t,\tau}(M)$ for
$2\le t<2^{*}$. Then \begin{equation}
i_{\varepsilon_{0},g_{0}+h_{0}}^{*}\left(|u_{n}|^{p-2}u_{n}\right)=
A_{h_{0}}^{\varepsilon_{0}}\left(|u_{n}|^{p-2}u_{n}\right)
\rightarrow A_{h_{0}}^{\varepsilon_{0}}\left(|u|^{p-2}u\right)\text{ in }H_{\varepsilon_{0},g_{0}+h_{0}}^{\tau}\label{eq:4.4.3}\end{equation}
Since the map $A$ is of class $C^{1}$ (see Lemma \ref{lem:derivate})
we have, for some $\theta\in(0,1)$\begin{eqnarray}
 &  & \|A_{h_{n}}^{\varepsilon_{0}}\left(|u_{n}|^{p-2}u_{n}\right)
 -A_{h_{0}}^{\varepsilon_{0}}\left(|u_{n}|^{p-2}u_{n}\right)\|_{H_{\varepsilon_{0},g_{0}}^{1}}\nonumber \\
 & = & \|A'(\varepsilon_{0},h_{0}+\theta(h_{n}-h_{0}))
 \left[0,h_{n}-h_{0}\right]\left(|u_{n}|^{p-2}u_{n}\right)\|_{H_{\varepsilon_{0},g_{0}}^{1}}\label{eq:4.4.4}\\
 & \le & \||u_{n}|^{p-1}\|_{L_{\varepsilon_{0},g_{0}}^{p'}}\|h_{n}-h_{0}\|_{k}\|A'(\varepsilon_{0},h_{0}
 +\theta(h_{n}-h_{0}))\|_{\mathcal{L}(\mathscr{B}_{\rho},\mathcal{L}(L_{\varepsilon_{0},g_{0}}^{p',\tau},
 H_{\varepsilon_0,g_{0}}^{\tau}))}\nonumber \end{eqnarray}
By (\ref{eq:4.4.3}) and (\ref{eq:4.4.4}) we get $A_{h_{n}}^{\varepsilon_{0}}\left(|u_{n}|^{p-2}u_{n}\right)
\rightarrow A_{h_{0}}^{\varepsilon_{0}}\left(|u|^{p-2}u\right)$
in $H_{\varepsilon_{0},g_{0}}^{\tau}$. 

Since $0=u_{n}-A_{h_{n}}^{\varepsilon_{0}}\left(|u_{n}|^{p-2}u_{n}\right)$
we get that $u_{n}$ converges to $u$ in $H_{g_{0}}^{\tau}$. Moreover
$u-A_{h_{0}}^{\varepsilon_{0}}\left(|u|^{p-2}u\right)=0$. Since $\mu_{0}$
and $\mu$ are not critical values for $J_{\varepsilon_{0},g_{0}}(u)$,
we have that $\mu_{0}<J_{\varepsilon_{0},g_{0}}(u)<\mu$. Then $u\in\mathscr{D}$. 
\end{proof}

\section{\label{sec:application}An application}

In this section we choose  $\tau=-\Id$ and the manifold $M$
invariant with respect to the involution $\tau=-\Id$. We also assume
$0\not\in M$, so $M_{\tau}=\emptyset$. 
Using the previous results of genericity for non degenerate sign changing solutions of problem (\ref{eq:Ptau}) 
we obtain a lower bound on the number of non degenerate solutions which change sign exactly once.
This estimate is formulated also in \cite{MP10}. In the cited paper this result is proved under an assumption 
on non degeneracy of critical points that we do not need.

We sketch the proof of propositions \ref{thm:T1} and \ref{thm:T3} showing how we use the results of 
genericity for non degeneracy of critical points to obtain the same estimate.

We recall that there exists a unique positive spherically symmetric
function $U\in H^{1}(\mathbb{R}^{n})$ such that \textbackslash{}begin
$-\Delta U+U=U^{p-1}$ in $\mathbb{R}^{n}$. Also, it is well known
that for any $\varepsilon>0$, $U_{\varepsilon}(x):=U\left(\frac{x}{\varepsilon}\right)$
is a solution of $-\varepsilon^{2}\Delta U_{\varepsilon}+U_{\varepsilon}=U_{\varepsilon}^{p-1}$
in $\mathbb{R}^{n}$.

Let $g_{0}$ be in $\mathscr{M}^{k}$ and $h$ be in $\mathscr{B}_{\rho}$
for some $\rho>0$. Let us define a smooth cut off real function $\chi_{R}$
such that $\chi_{R}(t)=1$ if $0\le t\le R/2$, $\chi_{R}(t)=0$ if
$t\ge R$ and $\left|\chi'(t)\right|<2/R$. Fixed $q\in M$ and $\varepsilon>0$
we define on $M$ the function\[
W_{q,\varepsilon}^{g}(x)=\left\{ \begin{array}{ccc}
U_{\varepsilon}(\exp_{q}^{-1}(x))\chi_{R}(|\exp_{q}^{-1}(x)|) &  & \text{if }x\in B_{g}(q,R)\\
0 &  & \text{otherwise}\end{array}\right.,\]
where $B_{g}(q,R)$ is the geodesic ball of radius $R$ centered at
$q$. We choose $R$ smaller than the injectivity radius of $M$ and
such that $B_{g}(q,R)\cap B_{g}(-q,R)=\emptyset$. Here and in the
following we set $g=g_{0}+h$. 

We can define a map $\Phi_{\varepsilon,g}:M\rightarrow\mathcal{N}_{\varepsilon,g}^{\tau}$
as \[
\Phi_{\varepsilon,g}(q)=t\left(W_{q,\varepsilon}^{g}\right)W_{q,\varepsilon}^{g}-t\left(W_{-q,\varepsilon}^{g}\right)W_{-q,\varepsilon}^{g}.\]
Here \[
\left[t\left(W_{q,\varepsilon}^{g}\right)\right]^{p-2}=\frac{\int_{M}\varepsilon^{2}|\nabla_{g}W_{q,\varepsilon}^{g}|^{2}+|W_{q,\varepsilon}^{g}|^{2}d\mu_{g}}{\int|W_{q,\varepsilon}^{g}|^{p}d\mu_{g}},\]
thus $t\left(W_{q,\varepsilon}^{g}\right)W_{q,\varepsilon}^{g}\in\mathcal{N}_{\varepsilon,g}$
and we have $\Phi_{\varepsilon,g}(q)=-\Phi_{\varepsilon,g}(-q)$.
Now we can define \begin{eqnarray*}
\tilde{\Phi}_{\varepsilon,g} & : & M/G\rightarrow\mathcal{N}_{\varepsilon,g}^{\tau}/\mathbb{Z}_{2}\\
\tilde{\Phi}_{\varepsilon,g}\left[q\right] & = & \left[\Phi_{\varepsilon,g}(q)\right]=
\left\{ \Phi_{\varepsilon,g}(q),\Phi_{\varepsilon,g}(-q)\right\} \end{eqnarray*}
where\begin{eqnarray*}
M/G=\left\{ \left[q\right]=(q,-q)\ :\ q\in M\right\}  &  & \mathcal{N}_{\varepsilon,g}^{\tau}/\mathbb{Z}_{2}
=\left\{ (u,-u)\ :\ u\in\mathcal{N}_{\varepsilon,g}^{\tau}\right\} .\end{eqnarray*}
We set $\tilde{J}_{\varepsilon,g}\left[u\right]=J_{\varepsilon,g}(u)$.
Obviously, $\tilde{J}_{\varepsilon,g}:\mathcal{N}_{\varepsilon,g}^{\tau}/\mathbb{Z}_{2}\rightarrow\mathbb{R}$.
\begin{lem}
\label{pro:4.5}For any $\delta>0$ there exists  $\varepsilon_{2}=\varepsilon_{2}(\delta)$
such that, if $\varepsilon<\varepsilon_{2}$ then \[
\tilde{\Phi}_{\varepsilon,g_{0}+h}(\left[q\right])\in\mathcal{N}_{\varepsilon,g_{0}+h}^{\tau}
\cap\tilde{J}_{\varepsilon,g_{0}+h}^{2(m_{\infty}+\delta)}\ \forall h\in\mathscr{B}_{\rho}.\]
Moreover we have that
\[
\lim_{\varepsilon\rightarrow0}m_{\varepsilon,g_{0}+h}^{\tau}=2m_{\infty}\text{ uniformly on }h\in\mathscr{B}_{\rho}.\]

\end{lem}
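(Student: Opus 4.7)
The plan is to reduce both assertions to the Euclidean limit problem on $\mathbb{R}^n$ by rescaling in normal coordinates, and then to exploit the disjoint supports forced by the $\tau$-symmetry.

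First I would establish that
\[
\lim_{\varepsilon\to 0} J_{\varepsilon,g_0+h}\bigl(t(W^{g_0+h}_{q,\varepsilon})W^{g_0+h}_{q,\varepsilon}\bigr) = m_\infty
\]
uniformly in $q\in M$ and $h\in\mathscr{B}_\rho$. The computation is routine: in normal coordinates at $q$ for $g=g_0+h$, the change of variable $x=\exp_q(\varepsilon y)$ yields $d\mu_g = (1+O(\varepsilon^2|y|^2))\,dy$, so $\|W^{g}_{q,\varepsilon}\|^2_{H^1_{\varepsilon,g}}\to \|U\|^2_{H^1(\mathbb{R}^n)}$ and $\|W^{g}_{q,\varepsilon}\|^p_{L^p_{\varepsilon,g}}\to \|U\|^p_{L^p(\mathbb{R}^n)}$; the cutoff error is $O(e^{-cR/\varepsilon})$ by the exponential decay of $U$. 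Consequently $t(W^g_{q,\varepsilon})\to 1$ and $J_{\varepsilon,g}(t(W^g_q)W^g_q)\to J_\infty(U)=m_\infty$, with uniformity in $q$ coming from compactness of $M$ and uniformity in $h$ from the uniform equivalence of the metrics $g_0+h$ with $g_0$ on $\mathscr{B}_\rho$.

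Second, our choice $B_g(q,R)\cap B_g(-q,R)=\emptyset$ makes the supports of $W^{g}_{q,\varepsilon}$ and $W^{g}_{-q,\varepsilon}$ disjoint and interchanged by $\tau$, so $\Phi_{\varepsilon,g}(q)\in\mathcal{N}^\tau_{\varepsilon,g}$ and, using that $\tau$ is an isometry of $g=g_0+h$,
\[
J_{\varepsilon,g}(\Phi_{\varepsilon,g}(q)) = J_{\varepsilon,g}(t(W^g_q)W^g_q)+J_{\varepsilon,g}(t(W^g_{-q})W^g_{-q}) = 2\,J_{\varepsilon,g}(t(W^g_q)W^g_q).
\]
The first step then gives $\tilde{J}_{\varepsilon,g_0+h}(\tilde\Phi_{\varepsilon,g_0+h}([q]))\to 2m_\infty$ uniformly, which yields the first conclusion for $\varepsilon<\varepsilon_2(\delta)$ and, taking the infimum over $q$, the upper bound $m^\tau_{\varepsilon,g_0+h}\le 2m_\infty+o(1)$ uniformly in $h$.

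Third, for the matching lower bound I would take any $u\in\mathcal{N}^\tau_{\varepsilon,g}$ and decompose $u=u^+-u^-$ with $u^\pm\ge 0$. Since $M_\tau=\emptyset$ and $u(\tau x)=-u(x)$, the sets $\{u>0\}$ and $\{u<0\}$ are disjoint and exchanged by $\tau$, so $u^+$ and $u^-$ have disjoint supports with $u^-(x)=u^+(\tau x)$. The isometry property of $\tau$ gives $\|u\|^2_{H^1_{\varepsilon,g}}=2\|u^+\|^2_{H^1_{\varepsilon,g}}$ and $\|u\|^p_{L^p_{\varepsilon,g}}=2\|u^+\|^p_{L^p_{\varepsilon,g}}$; the Nehari identity for $u$ then forces $u^+$ onto the Nehari manifold for the problem without the symmetry constraint, so $J_{\varepsilon,g}(u)=2J_{\varepsilon,g}(u^+)\ge 2m^+_{\varepsilon,g}$, where $m^+_{\varepsilon,g}$ denotes the least energy among positive Nehari elements. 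It is well-known (and provable by the same scaling plus a concentration–compactness argument) that $m^+_{\varepsilon,g_0+h}\to m_\infty$ uniformly in $h\in\mathscr{B}_\rho$, so combining with the upper bound gives the uniform limit $m^\tau_{\varepsilon,g_0+h}\to 2m_\infty$.

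The main obstacle is propagating uniformity in $h$ through all the asymptotic estimates. This is handled by choosing $\rho$ small enough that curvature bounds, injectivity radius, and volume-form errors for the metrics $g_0+h$ are controlled by geometric constants depending only on $g_0$, so that the $O(\varepsilon^2)$ and exponential-decay remainders in the normal-coordinate expansions, the norm-equivalence constants between $L^p_{\varepsilon,g_0}$ and $L^p_{\varepsilon,g_0+h}$, and the concentration–compactness limit defining $m^+_{\varepsilon,g_0+h}$, are all uniform in $h\in\mathscr{B}_\rho$.
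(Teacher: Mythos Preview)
Your proposal is correct and follows the standard approach. The paper itself does not give a proof of this lemma: immediately after stating it, the authors write ``For a proof of this result we refer to \cite{BBM07}.'' So there is no in-paper argument to compare against beyond the cited source, and your outline---normal-coordinate rescaling to pass to the Euclidean energy $m_\infty$, disjoint supports to get the factor $2$, and the decomposition $u=u^+-u^-$ together with the $\tau$-isometry to reduce the lower bound to the unconstrained least energy $m_{\varepsilon,g}$---is precisely the route taken in that reference (for a fixed metric). The one point that goes slightly beyond \cite{BBM07} is the uniformity in $h\in\mathscr{B}_\rho$, which you correctly isolate and handle via the uniform norm equivalence and uniform geometric bounds available for $\rho$ small.
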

For a proof of this result we refer to \cite{BBM07}.

For any function $u\in\mathcal{N}_{\varepsilon,g_{0}+h}^{\tau}$ we
define \[
\beta_{g}(u)=\frac{{\displaystyle \int_{M}x(u^{+})^{p}d\mu_{g}}}{{\displaystyle \int_{M}(u^{+})^{p}d\mu_{g}}}\]
 where $g=g_{0}+h$. Also, we define\begin{eqnarray*}
\tilde{\beta}_{g} & : & \left(\mathcal{N}_{\varepsilon,g_{0}+h}^{\tau}/\mathbb{Z}_{2}\right)
\cap\tilde{J}_{\varepsilon,g_{0}+h}^{2(m_{\infty}+\delta)}\rightarrow M_{d}/G\\
\tilde{\beta}_{g}(\left[u\right]) & := & \left[\tilde{\beta}_{g}(u)\right]=\left\{ \beta_{g}(u),\beta_{g}(-u)\right\} =
\left\{ \beta_{g}(u),-\beta_{g}(u)\right\} \end{eqnarray*}
where $M_{d}=\left\{ u\in M\ :\ d(x,M)<d\right\} $. 
\begin{lem}
\label{pro:4.6}There exists $\tilde{\delta}$ such that $\forall\delta<\tilde{\delta}$
there exists $\tilde{\varepsilon}=\tilde{\varepsilon}(\delta)$ and
for any $\varepsilon<\tilde{\varepsilon}$ the map \[
\tilde{\beta}_{g}\circ\tilde{\Phi}_{\varepsilon,g}:M/G\stackrel{\tilde{\Phi}_{\varepsilon,g}
}{\longrightarrow}\left(\mathcal{N}_{\varepsilon,g_{0}+h}^{\tau}/\mathbb{Z}_{2}\right)
\cap\tilde{J}_{\varepsilon,g_{0}+h}^{2(m_{\infty}+\delta)}\stackrel{\tilde{\beta}_{g}}{\longrightarrow}M_{d}/G\]
 is continuous and homotopic to identity, for all $g=g_{0}+h$ with
$h\in\mathscr{B}_{\rho}$. 
\end{lem}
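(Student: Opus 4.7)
The plan is to establish the lemma in three logical steps: (1) show that the composition $\tilde{\beta}_g \circ \tilde{\Phi}_{\varepsilon,g}$ is well-defined and lands in $M_d/G$ for $\varepsilon$ small; (2) verify continuity; (3) construct an explicit equivariant homotopy from this map to the inclusion $M/G \hookrightarrow M_d/G$.

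First, I would work upstairs on $M$ rather than on $M/G$. Observe that the supports of $W^g_{q,\varepsilon}$ and $W^g_{-q,\varepsilon}$ are the disjoint balls $B_g(q,R)$ and $B_g(-q,R)$, so the positive part of $u := \Phi_{\varepsilon,g}(q)$ is exactly $t(W^g_{q,\varepsilon}) W^g_{q,\varepsilon}$ and the coefficient $t(W^g_{q,\varepsilon})$ cancels in the barycenter. Hence
\[
\beta_g(\Phi_{\varepsilon,g}(q)) = \frac{\int_{B_g(q,R)} x\, (W^g_{q,\varepsilon}(x))^{p}\, d\mu_g}{\int_{B_g(q,R)} (W^g_{q,\varepsilon}(x))^{p}\, d\mu_g}.
\]
Switching to normal coordinates at $q$ via $x = \exp_q(\varepsilon z)$, the numerator becomes $\varepsilon^n\!\int_{B(0,R/\varepsilon)} \exp_q(\varepsilon z)\, U(z)^p \chi_R(\varepsilon|z|)^p |g(\varepsilon z)|^{1/2} dz$, and an analogous expression holds for the denominator. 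Since $\exp_q(\varepsilon z) \to q$ and $|g(\varepsilon z)|^{1/2} \to 1$ as $\varepsilon \to 0$ uniformly in $q \in M$ and in $h \in \mathscr{B}_\rho$ (the metrics $g = g_0 + h$ have uniformly bounded $C^k$ norms and a uniform positive lower bound on the injectivity radius for $\rho$ small), a dominated convergence argument yields
\[
\bigl|\beta_g(\Phi_{\varepsilon,g}(q)) - q\bigr|_{\mathbb{R}^N} \to 0 \quad \text{uniformly in } q \in M,\ h \in \mathscr{B}_\rho.
\]
Choosing $\tilde{\delta}$ and then $d=d(\tilde{\delta})$ such that $M_d$ deformation retracts onto $M$ via the nearest-point projection, we pick $\tilde{\varepsilon}=\tilde{\varepsilon}(\delta)$ small enough so that $|\beta_g(\Phi_{\varepsilon,g}(q))-q| < d/2$ for every $\varepsilon < \tilde{\varepsilon}$. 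This yields step (1), using Lemma \ref{pro:4.5} to also ensure the energy constraint $\tilde{J}_{\varepsilon,g_0+h}(\Phi_{\varepsilon,g}(q)) < 2(m_\infty+\delta)$.

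Step (2): continuity upstairs is clear from the continuous dependence of $W^g_{q,\varepsilon}$ on $q$ in $H^1_g$ (and hence in $L^p_g$) together with the continuity of $\beta_g$ on functions with non-vanishing positive part; this descends to continuity on quotients. The $G$-equivariance required for the descent is verified as follows: since $\tau = -\Id$ is a $g$-isometry, $\exp_{-q}(-v) = -\exp_q(v)$, so $W^g_{-q,\varepsilon}(x) = W^g_{q,\varepsilon}(-x)$ and $t(W^g_{-q,\varepsilon}) = t(W^g_{q,\varepsilon})$, hence $\Phi_{\varepsilon,g}(-q) = -\Phi_{\varepsilon,g}(q)$; changing variables $x \mapsto -x$ in the barycenter integrals and using $\tau$-invariance of $d\mu_g$ then gives $\beta_g(\Phi_{\varepsilon,g}(-q)) = -\beta_g(\Phi_{\varepsilon,g}(q))$.

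Step (3): for the homotopy, I would use the straight-line segment in the ambient $\mathbb{R}^N$ followed by retraction. Define $H : [0,1] \times M \to M_d$ by
\[
H(s,q) = (1-s)\, q + s\, \beta_g(\Phi_{\varepsilon,g}(q)).
\]
By the uniform estimate above, $|H(s,q) - q| < d/2$, so $H(s,q) \in M_d$. The equivariance $H(s,-q) = -H(s,q)$ follows from the equivariance of $\beta_g \circ \Phi_{\varepsilon,g}$, so $H$ descends to a continuous map $\tilde{H} : [0,1] \times M/G \to M_d/G$ interpolating between the inclusion $[q] \mapsto [q]$ at $s=0$ and $\tilde{\beta}_g \circ \tilde{\Phi}_{\varepsilon,g}$ at $s=1$.

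The main obstacle is the uniformity of the concentration estimate across both parameters $q \in M$ and $h \in \mathscr{B}_\rho$ simultaneously; this requires controlling the $C^k$-dependence of the exponential map and the volume density on $h$, but once one observes that the family $\{g_0+h : h \in \overline{\mathscr{B}_\rho}\}$ is a compact family of metrics in $C^k$ with uniform lower bounds on ellipticity, the standard estimates apply uniformly. The rest is essentially a quotient-level adaptation of the classical barycentric argument of Benci--Cerami.
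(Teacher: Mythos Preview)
Your argument is correct and follows the standard Benci--Cerami barycentric scheme. Note, however, that the paper does not actually prove this lemma: immediately after the statement it writes ``For a proof of this result we refer to \cite{BBM07}.'' Your proposal therefore supplies what the paper omits, and does so along exactly the lines one would find in that reference --- concentration of $W^g_{q,\varepsilon}$ at $q$ via normal coordinates, the equivariance $\beta_g(\Phi_{\varepsilon,g}(-q))=-\beta_g(\Phi_{\varepsilon,g}(q))$ coming from the fact that $\tau=-\Id$ is a $g$-isometry, and a straight-line homotopy in the ambient $\mathbb{R}^N$ that stays inside $M_d$.

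The one ingredient genuinely beyond \cite{BBM07} is the uniformity of all estimates over $h\in\mathscr{B}_\rho$, which you correctly isolate and handle by the $C^k$-precompactness of the family $\{g_0+h:h\in\overline{\mathscr{B}_\rho}\}$. A minor presentational point: in the paper $d$ is fixed once (so that $M_d$ retracts onto $M$) and $\tilde\delta$ is then chosen; you write ``choosing $\tilde\delta$ and then $d=d(\tilde\delta)$'', which reverses the dependence, but since your concentration estimate $|\beta_g(\Phi_{\varepsilon,g}(q))-q|\to 0$ is independent of $\delta$ anyway, this does not affect the argument.
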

For a proof of this result we refer to \cite{BBM07}.

Let us sketch the proof of Proposition \ref{thm:T1}. We are going to
find an estimate on the number of pairs non degenerate critical points
$(u,-u)$ for the functional $J_{\varepsilon,g}:H_{g}^{\tau}\rightarrow\mathbb{R}$
with energy close to $2m_{\infty}$ with respect to the parameters
$(\varepsilon,h)\in(0,\tilde{\varepsilon})\times\mathscr{B}_{\rho}$
for $\tilde{\varepsilon},\rho$ small enough.

We recall that, by Theorem \ref{thm:residuale}, given the positive
numbers $\tilde{\varepsilon},\rho$, the set

\[
D(\tilde{\varepsilon},\rho)=\left\{ \begin{array}{c}
(\varepsilon,h)\in(0,\tilde{\varepsilon})\times\mathscr{B}_{\rho}\ :\text{ any }u\in H_{g_{0}}^{\tau}\text{ solution of}\\
-\varepsilon^{2}\Delta_{g_{0}+h}u+u=|u|^{p-2}u\text{ is not degenerate}\end{array}\right\} \]
is a residual subset in $(0,\tilde{\varepsilon})\times\mathscr{B}_{\rho}$.
Since \[
\lim_{(\varepsilon,h)\rightarrow0}m_{\varepsilon,g_{0}+h}^{\tau}=2m_{\infty},\]
 given $\delta\in\left(0,\frac{m_{\infty}}{4}\right)$, for $(\varepsilon,h)$
small enough we have \[
0<2(m_{\infty}-\delta)<m_{\varepsilon,g_{0}+h}^{\tau}<2(m_{\infty}+\delta)<3m_{\infty},\]
 thus $2(m_{\infty}-\delta)$ is not a critical value of $J_{\varepsilon,g}$
on $H_{g}^{\tau}$. At this point we take $(\varepsilon,h)\in D(\tilde{\varepsilon},\rho)$
with $\tilde{\varepsilon},\rho$ small enough. Thus the critical points
of $J_{\varepsilon,g}$ such that $J_{\varepsilon,g}<3m_{\infty}$
are in a finite number by Theorem \ref{thm:residuale}, and then we
can assume that $2(m_{\infty}+\delta)$ is not a critical value of
$J_{\varepsilon,g}$. 

Let $\mathcal{N}_{\varepsilon}^{\tau}/\mathbb{Z}_{2}$ be the set
obtained by identifying antipodal points of the Nehari manifold $\mathcal{N}_{\varepsilon}^{\tau}$.
It is easy to check that the set $\mathcal{N}_{\varepsilon}^{\tau}/\mathbb{Z}_{2}$
is homeomorphic to the projective space $\mathbb{P}^{\infty}=\Sigma/\mathbb{Z}_{2}$
obtained by identifying antipodal points in $\Sigma$, $\Sigma$ being
the unit sphere in $H_{g}^{\tau}$. We are looking for pairs of nontrivial
critical points $(u,-u)$ of the functional $J_{\varepsilon}:H_{g}^{\tau}\rightarrow\mathbb{R}$,
that is searching for critical points of the functional \begin{eqnarray*}
\tilde{J}_{\varepsilon,g} & : & \left(H_{g}^{\tau}\smallsetminus\left\{ 0\right\} \right)/\mathbb{Z}_{2}\rightarrow\mathbb{R};\\
\tilde{J}_{\varepsilon,g}\left(\left[u\right]\right) & := & J_{\varepsilon,g}(u)=J_{\varepsilon,g}(-u).\end{eqnarray*}
We use the same Morse theory argument as in \cite{BC94}. The following
result can be found in (\cite{Ben95} and Lemma 5.2 of \cite{BC94})
\begin{equation}
P_{t}\left(\tilde{J}_{\varepsilon,g}^{2(m_{\infty}+\delta)},\ \tilde{J}_{\varepsilon,g}^{2(m_{\infty}-\delta)}\right)=
tP_{t}\left(\tilde{J}_{\varepsilon,g}^{2(m_{\infty}+\delta)}\cap\left(\mathcal{N}_{\varepsilon}^{\tau}/
\mathbb{Z}_{2}\right)\right).\label{eq:3.1}\end{equation}
By Lemma \ref{pro:4.5} and Lemma \ref{pro:4.6} we have
that $\tilde{\beta}_{g}\circ\tilde{\Phi}_{\varepsilon,g}:M/G\rightarrow M_{d}/G$
is a map homotopic to the identity of $M/G$ and that $M_{d}/G$ is
homotopic to $M/G$. Therefore we have \begin{equation}
P_{t}\left(\tilde{J}_{\varepsilon,g}^{2(m_{\infty}+\delta)}\cap
\left(\mathcal{N}_{\varepsilon}^{\tau}/\mathbb{Z}_{2}\right)\right)=P_{t}(M/G)+Z(t)\label{eq:3.3}\end{equation}
 were $Z(t)$ is a polynomial with non negative coefficients. Since
the functional $J_{\varepsilon,g}$ satisfies the Palais Smale condition
by the compactness of $M$, and the critical points of $J_{\varepsilon,g}$
in $J_{\varepsilon,g}^{3m_{\infty}}$ are non degenerate (because
$(\varepsilon,h)\in D(\tilde{\varepsilon},\rho)$), by Morse Theory
and relations (\ref{eq:3.1}) and (\ref{eq:3.3}) we get at least
$P_{1}(M/G)$ pairs $(u,-u)$ of non trivial solutions of $-\varepsilon^{2}\Delta_{g}u+u=|u|^{p-2}u$
with $J_{\varepsilon,g}(u)=J_{\varepsilon,g}(-u)<3m_{\infty}$. So,
these solutions change sign exactly once. That concludes the proof
of Proposition \ref{thm:T1}. 
\begin{rem}
\label{thm:T2}In the same way we obtain that, given $g_{0}\in\mathscr{M}^{k}$ and $\varepsilon_{0}>0$,
the set \[
\mathscr{A}^{*}=\left\{ \begin{array}{c}
h\in\mathscr{B}_{\rho}:\text{ the equation }-\varepsilon_{0}^{2}\Delta_{g_{0}+h}u+u=|u|^{p-2}u\\
\text{has at least }P_{1}(M/G)\text{ pairs of non degenerate solutions}\\
(u,-u)\in H_{g}^{\tau}\smallsetminus\left\{ 0\right\} \text{ which change sign exactly once}\end{array}\right\} \]
 is a residual subset of $\mathscr{B}_{\rho}$.
\end{rem}

The proof of Proposition \ref{thm:T3}
can be obtained with similar arguments.

\end{document}